\newtheorem{theorem}{Theorem}[section] % 1st argument is your name for it
\newtheorem{lemma}[theorem]{Lemma}     % 2nd argument is what is printed
\newtheorem{corollary}[theorem]{Corollary}
\newtheorem{conjecture}{Conjecture}  % 2nd argument is what is printed
\newtheorem*{definition}{definition}
\title{A Proof of the Cameron-Ku Conjecture} % This is the full title of the paper
\author{David Ellis}
\begin{document}
\maketitle

\begin{abstract}
A family of permutations \(\mathcal{A} \subset S_{n}\) is said to be \textit{intersecting} if any two permutations in \(\mathcal{A}\) agree at some point, i.e. for any \(\sigma, \pi \in \mathcal{A}\), there is some \(i\) such that \(\sigma(i)=\pi(i)\). Deza and Frankl \cite{dezafrankl} showed that if \(\mathcal{A} \subset S_n\) is intersecting, then \(|\mathcal{A}| \leq (n-1)!\). Cameron and Ku \cite{cameron} showed that if equality holds, then \(\mathcal{A} = \{\sigma \in S_{n}: \sigma(i)=j\}\) for some \(i\) and \(j\). They conjectured a `stability' version of this result, namely that there exists a constant \(c < 1\) such that if \(\mathcal{A} \subset S_{n}\) is an intersecting family of size at least \(c(n-1)!\), then there exist \(i\) and \(j\) such that every permutation in \(\mathcal{A}\) maps \(i\) to \(j\) (we call such a family `centred'). They also made a stronger `Hilton-Milner' type conjecture, namely, that for \(n \geq 6\), if \(\mathcal{A} \subset S_{n}\) is a non-centred intersecting family, then \(\mathcal{A}\) cannot be larger than the family \(\mathcal{C}=\{\sigma \in S_{n}: \sigma(1)=1, \sigma(i)=i \textrm{ for some } i > 2\} \cup \{(12)\}\), which has size \((1-1/e+o(1))(n-1)!\).

We prove the stability conjecture, and also the Hilton-Milner type conjecture for \(n\) sufficiently large. Our proof makes use of the classical representation theory of \(S_{n}\). One of our key tools will be an extremal result on cross-intersecting families of permutations, namely that for \(n \geq 4\), if \(\mathcal{A},\mathcal{B}\subset S_{n}\) are cross-intersecting, then \(|\mathcal{A}||\mathcal{B}| \leq ((n-1)!)^{2}\). This was a conjecture of Leader \cite{leader}; it was proved for \(n\) sufficiently large by Friedgut, Pilpel and the author in \cite{jointpaper}.\end{abstract}

\section{Introduction}
We work in the symmetric group \(S_{n}\), the group of all permutations of \(\{1,2,\ldots,n\} = [n]\). A family of permutations \(\mathcal{A} \subset S_{n}\) is said to be \textit{intersecting} if any two permutations in \(\mathcal{A}\) agree at some point, i.e. for any \(\sigma, \pi \in \mathcal{A}\), there is some \(i \in [n]\) such that \(\sigma(i)=\pi(i)\).

It is natural to ask: how large can an intersecting family be? The family of all permutations fixing 1 is an obvious example of a large intersecting family of permutations; it has size \((n-1)!\). More generally, for any \(i,j \in [n]\), the collection of all permutations mapping \(i\) to \(j\) is clearly an intersecting family of the same size; we call these the `\textit{1-cosets}' of \(S_{n}\), since they are the cosets of the point-stabilizers.

Deza and Frankl \cite{dezafrankl} showed that if \(\mathcal{A} \subset S_{n}\) is intersecting, then \(|\mathcal{A}| \leq (n-1)!\); this is known as the Deza-Frankl theorem. They gave a short, direct `partitioning' proof: take any \(n\)-cycle \(\rho\), and let \(H\) be the cyclic group of order \(n\) generated by \(\rho\). For any left coset \(\sigma H\) of \(H\), any two distinct permutations in \(\sigma H\) disagree at every point, and therefore \(\sigma H\) contains at most 1 member of \(\mathcal{A}\). Since the left cosets of \(H\) partition \(S_{n}\), it follows that \(|\mathcal{A}| \leq (n-1)!\).

Deza and Frankl conjectured that equality holds only for the 1-cosets of \(S_{n}\). This turned out to be harder than expected; it was eventually proved independently by Cameron and Ku \cite{cameron} and Larose and Malvenuto \cite{larose}; Wang and Zhang \cite{wang} have recently given a shorter proof.

We say that an intersecting family \(\mathcal{A} \subset S_{n}\) is \emph{centred} if there exist \(i,j \in [n]\) such that every permutation in \(\mathcal{A}\) maps \(i\) to \(j\), i.e. \(\mathcal{A}\) is contained within a 1-coset of \(S_{n}\). Cameron and Ku asked how large a \textit{non-centred} intersecting family can be. Experimentation suggests that the further an intersecting family is from being centred, the smaller it must be. The following are natural examples of large non-centred intersecting families:
\\
\begin{itemize}
\item \(\mathcal{B}=\{\sigma \in S_{n}: \sigma \textrm{ fixes at least two points in }[3]\}\).\\
\\
This has size \(3(n-2)!-2(n-3)!\).\\
It requires the removal of \((n-2)!-(n-3)!\) permutations to make it centred.\\
\item \(\mathcal{C} = \{\sigma: \sigma(1)=1,\ \sigma \textrm{ intersects } (1\ 2)\} \cup \{(1\ 2)\}\).\\
\\
\textit{Claim:} \(|\mathcal{C}| = (1-1/e+o(1))(n-1)!\).\\
\textit{Proof of Claim:} Let \(\mathcal{D}_{n} = \{\sigma \in S_{n}:\ \sigma(i)\neq i\ \forall i \in [n]\}\) denote the set of \textit{derangements} of \([n]\) (permutations in \(S_n\) without fixed points); let \(d_{n} = |\mathcal{D}_{n}|\) denote the number of derangements of \([n]\). By the inclusion-exclusion formula,
\[d_{n} = \sum_{i=0}^{n} (-1)^{i} {n \choose i}(n-i)! = n! \sum_{i=0}^{n}\frac{(-1)^{i}}{i!} = n!(1/e + o(1)).\]
Note that a permutation which fixes 1 intersects \((1\ 2)\) if and only if it has a fixed point greater than \(2\). The number of permutations fixing 1 alone is clearly \(d_{n-1}\); the number of permutations fixing 1 and 2 alone is clearly \(d_{n-2}\), so the number of permutations fixing 1 and some other point \(>2\) is \((n-1)!-d_{n-1}-d_{n-2}\). Hence,
\[|\mathcal{C}| = (n-1)!-d_{n-1}-d_{n-2} = (1-1/e+o(1))(n-1)!\]
as required.\\
\\
Note that \(\mathcal{C}\) can be made centred just by removing \((1\ 2)\).
\end{itemize}
\vspace{\baselineskip}

For \(n \leq 5\), \(\mathcal{B}\) and \(\mathcal{C}\) have the same size; for \(n \geq 6\), \(\mathcal{C}\) is larger. Cameron and Ku \cite{cameron} conjectured that for \(n \geq 6\), \(\mathcal{C}\) has the largest possible size of any non-centred intersecting family. Further, they conjectured that if \(\mathcal{A} \subset S_n\) is a non-centred intersecting family of the same size as \(\mathcal{C}\), then \(\mathcal{A}\) must be a `double translate' of \(\mathcal{C}\), meaning that there exist \(\pi,\tau \in S_{n}\) such that \(\mathcal{A}=\pi \mathcal{C} \tau\). Note that if \(\mathcal{F} \subset S_{n}\), any double translate of \(\mathcal{F}\) has the same size as \(\mathcal{F}\), is intersecting if and only if \(\mathcal{F}\) is, and is centred if and only if \(\mathcal{F}\) is. Double-translation will be our notion of `isomorphism' for intersecting families of permutations. 

We prove the Cameron-Ku conjecture for all sufficiently large \(n\). This implies the weaker `stability' conjecture of Cameron and Ku \cite{cameron} --- namely, that there exists a constant \(c>0\) such that any intersecting family \(\mathcal{A} \subset S_{n}\) of size at least \((1-c)(n-1)!\) is centred. We prove the latter using a slightly shorter argument.

Our proof makes use of the classical representation theory of \(S_{n}\). One of our key tools will be an extremal result on cross-intersecting families of permutations. A pair of families of permutations \(\mathcal{A},\mathcal{B} \subset S_{n}\) is said to be \textit{cross-intersecting} if for any \(\sigma \in \mathcal{A}, \tau \in \mathcal{B}\), \(\sigma\) and \(\tau\) agree at some point, i.e. there is some \(i \in [n]\) such that \(\sigma(i)=\tau(i)\). Leader \cite{leader} conjectured that for \(n \geq 4\), if \(\mathcal{A},\mathcal{B}\) are cross-intersecting, then \(|\mathcal{A}||\mathcal{B}| \leq ((n-1)!)^{2}\), with equality if and only if \(\mathcal{A}=\mathcal{B}=\{\sigma \in S_{n}: \sigma(i)=j\}\) for some \(i,j \in [n]\). Note that the statement does not hold for \(n=3\), as the pair
\[\mathcal{A} = \{(1),(123),(321)\},\ \mathcal{B} = \{(12),(23),(31)\}\]
is cross-intersecting with \(|\mathcal{A}||\mathcal{B}| = 9\).

A \(k\)-cross-intersecting generalization of Leader's conjecture was proved by Friedgut, Pilpel and the author in \cite{jointpaper}, for \(n\) sufficiently large depending on \(k\). In order to prove the Cameron-Ku conjecture for \(n\) sufficiently large, we could in fact make do with the \(k=1\) case of this result. For completeness, however, we sketch a simpler proof of Leader's conjecture for all \(n \geq 4\), based on the eigenvalues of the derangement graph rather than those of the weighted graph constructed in \cite{jointpaper}. Interestingly, no combinatorial proof of Leader's conjecture is known.

There is a close analogy between intersecting families of permutations and intersecting families of \(r\)-sets, which we now describe. As usual, let \([n]^{(r)}\) denote the set of all \(r\)-element subsets (`\(r\)-sets') of \([n]\). We say that a family \(\mathcal{A} \subset [n]^{(r)}\) is \textit{intersecting} if any two of its sets have nonempty intersection. The classical Erd\H{o}s-Ko-Rado theorem states that if \(r < n/2\), then the largest intersecting families of \(r\)-subsets of \([n]\) are the `stars', meaning the families of the form \(\{x \in [n]^{(r)}:\ i \in x\}\) for \(i \in [n]\). This corresponds to the fact that the largest intersecting families of permutations in \(S_n\) are the 1-cosets.

We say that an intersecting family of \(r\)-sets is \textit{trivial} if there is an element in all of its sets. Hilton and Milner \cite{hiltonmilner} proved that for \(r \geq 4\) and \(n > 2r\), if \(\mathcal{A} \subset [n]^{(r)}\) is a non-trivial intersecting family of maximum size, then
\[\mathcal{A}=\{x \in [n]^{(r)}: i \in [n],\ x \cap y \neq \emptyset\} \cup \{y\}\]
for some \(i \in [n]\) and some \(r\)-set \(y\) not containing \(i\), so it can be made into a trivial family by removing just one \(r\)-set. The Cameron-Ku conjecture is an exact analogue of this for permutations.

\section{Cross-intersecting families of permutations}
Our aim in this section is to prove Leader's conjecture: if \(n \geq 4\), and \(\mathcal{A},\mathcal{B} \subset S_n\) are cross-intersecting, then \(|\mathcal{A}||\mathcal{B}| \leq ((n-1)!)^2\). We do this by applying a `cross-independent' analogue of Hoffman's eigenvalue bound to the derangement graph; the eigenvalues of the derangement graph are analysed using the representation theory of \(S_n\).

The {\em derangement graph} is the graph \(\Gamma\) with vertex-set \(S_n\), where two permutations are joined if they disagree everywhere, i.e.
\[V(\Gamma)=S_n,\quad E(\Gamma) = \{\sigma \tau:\ \sigma,\tau \in S_n,\ \sigma(i) \neq \tau(i)\ \forall i \in [n]\}.\]
Recall that if \(G\) is a finite group, and \(S \subset G\) is inverse-closed (\(S^{-1}=S\)), the {\em Cayley graph on \(G\) generated by \(S\)} is the graph with vertex-set \(G\), where we join \(g\) to \(gs\) for each \(g \in G\) and each \(s \in S\). Clearly, the derangement graph \(\Gamma\) is the Cayley graph on \(S_n\) generated by \(\mathcal{D}_n\), the set of derangments of \([n]\), so it is \(d_n\)-regular. Of course, an intersecting family of permutations in \(S_n\) is precisely an independent set in \(\Gamma\), and \(\mathcal{A},\mathcal{B} \subset S_n\) are cross-intersecting if and only if there are no edges of \(\Gamma\) between them.

Hoffman's theorem provides an upper bound on the maximum size of an independent set in a regular graph in terms of the minimum eigenvalue of the adjacency matrix of the graph. Recall that if \(H = (V,E)\) is an \(N\)-vertex graph, the {\em adjacency matrix} of \(H\) is the 0-1 matrix with rows and columns indexed by \(V\), and with
\[A_{x,y} = \left\{\begin{array}{rl} 1 & \textrm{if }xy \in E(H);\\
                     0 & \textrm{otherwise}.
\end{array}\right.\]

Since \(A\) is a real symmetric matrix, all its eigenvalues are real, and for any inner product on \(\mathbb{R}^V\), we can find an orthonormal basis of \(\mathbb{R}^V\) consisting of real eigenvectors of \(A\). Let \(\lambda_1 \geq \lambda_2 \geq \ldots \geq \lambda_N = \lambda_{\min}\) be the eigenvalues of \(A\), repeated with their multiplicities. It is easy to see that if \(H\) is \(d\)-regular, then \(\lambda_1 = d\). Hoffman's bound is as follows:

\begin{theorem}(Hoffman's bound)\\
\label{thm:hoffman}
Let \(H = (V,E)\) be a \(d\)-regular graph, and let \(A\) be the adjacency matrix of \(H\). Let \(\lambda_{\min}\) denote the least eigenvalue of \(A\). If \(X \subset V(H)\) is an independent set in \(H\), then
\[\frac{|X|}{|V|} \leq \frac{-\lambda_{\min}}{d - \lambda_{\min}}.\]
If equality holds, then the characterstic vector \(v_{X}\) of \(X\) satisfies:
\[v_X - \tfrac{|X|}{|V|} \boldsymbol{1} \in \textrm{Ker}(A - \lambda_{\min}I).\]
\end{theorem}

Ku conjectured that the minimum eigenvalue of the derangement graph is \(-d_n/(n-1)\). This was first proved by Renteln \cite{renteln}, using symmetric functions. Substituting this value into Hoffman's bound implies that an intersecting family of permutations in \(S_n\) has size at most \((n-1)!\), recovering the theorem of Deza and Frankl.

To deal with cross-intersecting families, we first prove an analogue of Hoffman's bound for `cross-independent' sets; this is a variant of a result in \cite{kaplan}.

\begin{theorem} \label{thm:crosshoffman}
(i) Let \(H=(V,E)\) be a \(d\)-regular graph on \(N\) vertices, whose adjacency matrix \(A\) has eigenvalues \(\lambda_{1}=d \geq \lambda_{2} \geq \ldots \geq \lambda_{N}\). Let \(\nu = \max(|\lambda_{2}|,|\lambda_{N}|)\). Suppose \(X\) and \(Y\) are sets of vertices of \(\Gamma\) with no edges between them, i.e. \(xy \notin E(\Gamma)\) for every \(x \in X\) and \(y \in Y\). Then
\begin{equation}
\label{eq:bestbound}
\sqrt{|X||Y|} \leq \frac{\nu}{d+\nu}N.
\end{equation}
(ii) Suppose further that \(|\lambda_{2}| \neq |\lambda_{N}|\), and let \(\lambda'\) be the larger in modulus of the two. Let \(v_{X},v_{Y} \in \mathbb{R}^V\) be the characteristic vectors of \(X,Y\) and let \(\mathbf{f}\) denote the all-1's vector in \(\mathbb{R}^{V}\). If equality holds in (\ref{eq:bestbound}), then \(|X|=|Y|\), and we have
\[v_{X}-(|X|/N)\mathbf{f},\ v_{Y}-(|Y|/N)\mathbf{f}\ \in \ \textrm{Ker}(A-\lambda'I).\]
\end{theorem}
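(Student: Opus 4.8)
The plan is a Hoffman-type spectral argument, adapted to the ``no edges between $X$ and $Y$'' condition. Write $v_X, v_Y \in \{0,1\}^{N}$ for the characteristic vectors of $X$ and $Y$, and observe first that the hypothesis is equivalent to $v_X^{T} A v_Y = 0$, since $v_X^{T} A v_Y$ counts the ordered pairs $(x,y) \in X \times Y$ with $xy \in E(\Gamma)$. Put $\alpha = |X|/N$ and $\beta = |Y|/N$. Since $\mathbf{f}$ spans (part of) the $d$-eigenspace of the symmetric matrix $A$, I would decompose $v_X = \alpha \mathbf{f} + u$ and $v_Y = \beta \mathbf{f} + w$ with $u, w \perp \mathbf{f}$; the coefficient of $\mathbf{f}$ here is $\langle v_X,\mathbf{f}\rangle/\|\mathbf{f}\|^{2} = |X|/N = \alpha$, and $\|u\|^{2} = \|v_X\|^{2} - \alpha^{2} N = \alpha(1-\alpha)N$, and similarly $\|w\|^{2} = \beta(1-\beta)N$.

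Next I would expand $0 = v_X^{T} A v_Y = \alpha\beta\,\mathbf{f}^{T} A \mathbf{f} + u^{T} A w$, the two cross terms vanishing because $A\mathbf{f} = d\mathbf{f} \perp u,w$; hence $u^{T} A w = -\alpha\beta d N$. Since $u, w \in \mathbf{f}^{\perp}$, on which every eigenvalue of $A$ has modulus at most $\nu$, expanding $u,w$ in a real orthonormal eigenbasis and applying the triangle inequality followed by Cauchy--Schwarz gives $|u^{T} A w| \le \nu \|u\|\|w\|$. Combining, $\alpha\beta d N \le \nu\,\sqrt{\alpha(1-\alpha)\beta(1-\beta)}\,N = \nu N \sqrt{\alpha\beta}\,\sqrt{(1-\alpha)(1-\beta)}$. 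The one step that is not purely mechanical is to note $\sqrt{(1-\alpha)(1-\beta)} \le 1 - \sqrt{\alpha\beta}$, which after squaring is just AM--GM, $\alpha + \beta \ge 2\sqrt{\alpha\beta}$; dividing through by $\sqrt{\alpha\beta}\,N$ then yields $\sqrt{\alpha\beta}\,(d+\nu) \le \nu$, i.e. $\sqrt{|X||Y|} = \sqrt{\alpha\beta}\,N \le \frac{\nu}{d+\nu}\,N$, which is (i). (The cases $X = \emptyset$ or $Y = \emptyset$ are trivial.)

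For (ii), equality in the final bound forces equality in both inequalities used above. Equality in AM--GM forces $\alpha = \beta$, i.e. $|X| = |Y|$. Equality in $|u^{T} A w| \le \nu\|u\|\|w\|$ forces, by the Cauchy--Schwarz equality condition together with the triangle-inequality equality condition, that the eigenbasis coordinates of $u$ and of $w$ are supported only on eigenspaces whose eigenvalue has modulus exactly $\nu$. This is the one place the hypothesis $|\lambda_2| \ne |\lambda_N|$ enters: I would check that under it, $\lambda'$ is the \emph{only} eigenvalue of modulus $\nu$ --- any $\lambda_i$ with $i \ge 2$ satisfies $\lambda_N \le \lambda_i \le \lambda_2$, while $-\nu$ is strictly smaller than whichever of $\lambda_2,\lambda_N$ has the smaller modulus, so $|\lambda_i| = \nu$ forces $\lambda_i = \lambda'$. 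Hence $u, w \in E(\lambda')$, so $v_X = \alpha\mathbf{f} + u$ and $v_Y = \beta\mathbf{f} + w$ lie in $\mathrm{Span}\{\mathbf{f}\} \oplus E(\lambda')$; equivalently, $v_X - (|X|/N)\mathbf{f} = u$ and $v_Y - (|Y|/N)\mathbf{f} = w$ are eigenvectors of $A$ with eigenvalue $\lambda'$, as claimed.

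I do not expect any serious obstacle here: the argument is essentially routine linear algebra. The only points demanding a little care are the AM--GM manipulation that converts the bilinear estimate $\alpha\beta d N \le \nu N\sqrt{\alpha(1-\alpha)\beta(1-\beta)}$ into the clean closed form $\nu/(d+\nu)$, and, in the equality analysis, the verification that $|\lambda_2|\neq|\lambda_N|$ excludes modulus-$\nu$ eigenvalues other than $\lambda'$ --- without that hypothesis one could only conclude that the shifted characteristic vectors lie in the span of the $\lambda_2$- and $\lambda_N$-eigenspaces.
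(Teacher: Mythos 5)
Your proposal is correct and follows essentially the same route as the paper: expand $v_X,v_Y$ in an orthonormal eigenbasis containing $\mathbf{f}$, use $v_X^{\top}Av_Y=0$ together with the bound $\nu$ on the eigenvalues orthogonal to $\mathbf{f}$, apply Cauchy--Schwarz and then the AM--GM step $\sqrt{(1-\alpha)(1-\beta)}\le 1-\sqrt{\alpha\beta}$, with the same equality analysis (AM--GM forcing $|X|=|Y|$, and tightness of the spectral/Cauchy--Schwarz step forcing the shifted characteristic vectors into $E(\lambda')$ once $|\lambda_2|\neq|\lambda_N|$ rules out a second eigenvalue of modulus $\nu$). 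The only differences are cosmetic (unnormalized inner product, writing $v_X=\alpha\mathbf{f}+u$ instead of coordinates, and a slightly rearranged final algebraic step), and your equality discussion is, if anything, a bit more explicit than the paper's.
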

\begin{proof}
This is a straightforward extension of the proof of Hoffman's theorem, with an application of the Cauchy-Schwarz inequality. Equip \(\mathbb{R}^{V}\) with the inner product:
\[\langle u,v \rangle = \frac{1}{N}\sum_{i=1}^{N} u(i)v(i),\]
and let
\[||u|| = \sqrt{\frac{1}{N}\sum_{i=1}^{N}u(i)^{2}}\]
be the induced Euclidean norm. Let \(u_{1} = \mathbf{f},u_{2},\ldots,u_{N}\) be an orthonormal basis of real eigenvectors of \(A\) corresponding to the eigenvalues \(\lambda_{1}=d,\lambda_{2},\ldots,\lambda_{N}\). Let \(X,Y\) be as above; let \(\alpha = |X|/N\), and let \(\beta = |Y|/N\). Write
\[v_{X}=\sum_{i=1}^{N} \xi_{i} u_{i},\quad v_{Y} = \sum_{i=1}^{N} \eta_{i} u_{i}\]
as linear combinations of the eigenvectors of \(A\). We have \(\xi_{1}=\alpha\), \(\eta_{1}=\beta\), and
\[\sum_{i=1}^{N} \xi_{i}^{2} = ||v_{X}||^{2} = |X|/N = \alpha,\quad \sum_{i=1}^{N} \eta_{i}^{2} = ||v_{Y}||^{2} = |Y|/N = \beta.\]
Since there are no edges of \(H\) between \(X\) and \(Y\), we have:
\begin{equation}
\label{eq:sum}
0=\sum_{x \in X,y \in Y}A_{x,y}=v_{Y}^{\top} A v_{X} = \sum_{i=1}^{N} \lambda_{i}\xi_{i}\eta_{i} = d \alpha \beta + \sum_{i=2}^{N} \lambda_{i}\xi_{i} \eta_{i} \geq d \alpha \beta - \nu \left|\sum_{i=2}^{N}\xi_{i}\eta_{i}\right|.
\end{equation}
Provided \(|\lambda_{2}| \neq |\lambda_{N}|\), if we have equality above, then \(\xi_{i} = \eta_{i} = 0\) unless \(\lambda_{i}=d\) or \(\lambda'\), so \(v_{X}-\alpha \mathbf{f}\) and \(v_{Y}-\beta \mathbf{f}\) are both \(\lambda'\)-eigenvectors of \(A\).

The Cauchy-Schwarz inequality gives:
\[\left|\sum_{i=2}^{N}\xi_{i}\eta_{i}\right| \leq \sqrt{\sum_{i=2}^{N} \xi_{i}^{2} \sum_{i=2}^{N} \eta_{i}^{2}} = \sqrt{(\alpha-\alpha^{2})(\beta-\beta^{2})}.\]
Substituting this into (\ref{eq:sum}) gives:
\[d \alpha\beta \leq \nu \sqrt{(\alpha-\alpha^{2})(\beta-\beta^{2})},\]
so
\[\frac{\alpha\beta}{(1-\alpha)(1-\beta)} \leq (\nu/d)^{2}.\]
By the AM/GM inequality, 
\((\alpha+\beta)/2 \geq \sqrt{\alpha\beta}\) with equality if and only if \(\alpha = \beta\), so
\[\frac{\alpha\beta}{(1-\sqrt{\alpha\beta})^{2}} = \frac{\alpha\beta}{1-2 \sqrt{\alpha\beta} + \alpha \beta} \leq \frac{\alpha\beta}{1-\alpha-\beta + \alpha\beta} \leq (\nu/d)^{2},\]
implying that
\[\sqrt{\alpha\beta} \leq \frac{\nu}{d+\nu}.\]
Hence, we have
\[\sqrt{|X||Y|} \leq \frac{\nu}{d+\nu}N.\]
Provided \(|\lambda_{2}| \neq |\lambda_{N}|\), we have equality only if \(|X|=|Y|=\frac{\nu}{d+\nu}N\) and \(v_{X}-\alpha \mathbf{f}\) and \(v_{Y}-\beta \mathbf{f}\) are both \(\lambda'\)-eigenvectors of \(A\), completing the proof.
\end{proof}
We will show that for \(n \geq 5\), the derangement graph satisfies the hypotheses of this result with \(\nu = d_{n}/(n-1)\); in fact, \(\lambda_{N} = -\frac{d_{n}}{n-1}\), and \(\max_{i \neq 1,N}|\lambda_i| = O((n-2)!)\). The derangement graph is a {\em normal} Cayley graph, meaning that its generating set is a union of conjugacy-classes; as is well-known, there is a particularly nice correspondence between eigenspaces of normal Cayley graphs, and irreducible representations of the group.

Note that the least eigenvalue of the derangement graph was first calculated by Renteln \cite{renteln}, using symmetric functions, and somewhat later by Ku and Wales \cite{kuwales}, and by Godsil and Meagher \cite{meagher}. We analyse the eigenvalues of the derangement graph differently, employing a convenient trick known as the `trace method' to bound all eigenvalues of high multiplicity. The idea of the trace method is simple: if \(H\) is a graph on \(N\) vertices, whose adjacency matrix \(A\) has eigenvalues \(\lambda_{1} \geq \lambda_{2} \geq \ldots \geq \lambda_{N}\), then for any \(k \in \mathbb{N}\),
\[\textrm{Trace}(A^k) = \sum_{i=1}^{N}\lambda_i^{k}.\]
On the other hand, \(\textrm{Trace}(A^k)\) is also the number of closed walks of length \(k\) in \(H\). We will apply this with \(k=2\); \(\textrm{Trace}(A^2)\) is simply twice the number of edges of \(H\).

\subsubsection*{Background on general representation theory}
We now recall the concepts we need from general representation theory. Readers familiar with representation theory may wish to skip this section; others may wish to refer to \cite{isaacs} for additional information.

Let \(G\) be a finite group, and let \(F\) be a field. A {\em representation of \(G\) over \(F\)} is a pair \((\rho,V)\), where \(V\) is a finite-dimensional vector space over \(F\), and \(\rho:\ G \to GL(V)\) is a group homomorphism from \(G\) to the group of all invertible linear endomorphisms of \(V\). The vector space \(V\), together with the linear action of \(G\) defined by \(gv = \rho(g)(v)\), is sometimes called an \(FG\)-{\em module}.  A {\em homomorphism} between two representations \((\rho,V)\) and \((\rho',V')\) is a linear map \(\phi:V \to V'\) such that \(\phi(\rho(g)(v)) = \rho'(g)(\phi(v))\) for all \(g \in G\) and \(v \in V\). If there exists such a \(\phi\) which is bijective, then the two representations are said to be {\em isomorphic}, or {\em equivalent}, and we write \((\rho,V) \cong (\rho',V')\). If \(\dim(V)=n\), we say that \(\rho\) {\em has dimension} \(n\).

The representation \((\rho,V)\) is said to be {\em irreducible} if it has no proper subrepresentation, i.e. there is no proper subspace of \(V\) which is \(\rho(g)\)-invariant for all \(g \in G\). {\em Schur's lemma} states that if \((\rho,V)\) is an irreducible representation of \(G\) over \(\mathbb{C}\), then the only linear endomorphisms of \(V\) which commute with \(\rho\) are scalar multiples of the identity.\footnote{Recall that a linear endomorphism \(\alpha\) of \(V\) is said to {\em commute} with \(\rho\) if \(\alpha\) commutes with \(\rho(g)\) for all \(g \in G\), i.e. \(\rho(g) \circ \alpha = \alpha \circ \rho(g)\) for all \(g \in G\).}

When \(F = \mathbb{R}\) or \(\mathbb{C}\), it turns out that there are only finitely many isomorphism classes of irreducible representations of \(G\), and {\em any} representation of \(G\) is isomorphic to a direct sum of irreducible representations of \(G\).

If \((\rho,V)\) is a representation of \(V\), the {\em character} \(\chi_{\rho}\) of \(\rho\) is the map
\begin{eqnarray*}
\chi_{\rho}:  G & \to & F;\\
 \chi_{\rho}(g) &=& \textrm{Trace} (\rho(g)).
\end{eqnarray*}
The usefulness of characters lies in the fact that {\em two complex representations are isomorphic if and only if they have the same character.}

Given two representations \((\rho,V)\) and \((\rho',V')\) of \(G\), we can form their direct sum, the representation \((\rho \oplus \rho',V \oplus V')\), and their tensor product, the representation \((\rho \otimes \rho',V \otimes V')\). We have \(\chi_{\rho \oplus \rho'} = \chi_{\rho}+ \chi_{\rho'}\), and \(\chi_{\rho \otimes \rho'} = \chi_{\rho} \cdot \chi_{\rho'}\) (the pointwise product).

The {\em group algebra} \(FG\) denotes the \(F\)-vector space with basis \(G\) and multiplication defined by extending the group multiplication linearly. In other words,
\[FG = \left\{\sum_{g \in G}x_{g}g:\ x_{g} \in F\ \forall g \in G\right\},\]
and
\[\left(\sum_{g \in G} x_{g}g\right)\left(\sum_{h\in G}y_{h}h\right) = \sum_{g,h \in G} x_{g}y_{h} (g h).\]
As a vector space, \(FG\) may be identified with \(F[G]\), the vector-space of all \(F\)-valued functions on \(G\), by identifying \(\sum_{g \in G} x_g g\) with the function \(g \mapsto x_g\). The representation defined by
\[\rho(g)(x) = gx\quad (g \in G,\ x \in FG)\]
is called the {\em left regular representation} of \(G\); the corresponding \(FG\)-module is called the {\em group module}.

Let \(\Gamma\) be a graph on \(G\), and let \(A\) be the adjacency matrix of \(\Gamma\). We may consider \(A\) as a linear operator on either \(\mathbb{R}[G]\) or \(\mathbb{C}[G]\); it makes no difference to the eigenvalues, but the latter is more convenient in general. We have
\[A f(g) = \sum_{h\in G:\atop gh \in E(\Gamma)} f(h)\quad \forall f \in \mathbb{C}[G],\ g \in G.\]

If \(\Gamma\) is a Cayley graph on \(G\) with (inverse-closed) generating set \(S\), then idenfying \(\mathbb{C}G\) with \(\mathbb{C}[G]\), the adjacency matrix of \(\Gamma\) acts on \(\mathbb{C}G\) by right-multiplication by \(\sum_{s \in S} s\):

\begin{eqnarray*}
A \left(\sum_{g \in G} x_g g\right) & = & \sum_{g \in G} \sum_{s \in S} x_{gs} g\\
& = & \sum_{h \in G} \sum_{s \in S} x_h (h s^{-1})\\
& = & \sum_{h \in G} \sum_{s \in S} x_h (h s)\\
& = & \left(\sum_{g \in G} x_g g\right) \left(\sum_{s \in S}s\right).
\end{eqnarray*}

If \(\Gamma\) is a normal Cayley graph, then \(\sum_{s \in S} s\) lies in the {\em centre} of \(\mathbb{C}G\) --- i.e., it commutes with every \(x \in \mathbb{C}G\). This leads, via Schur's lemma, to an explicit 1-1 correspondence between the eigenvalues of \(\Gamma\) and the isomorphism classes of irreducible representations of \(G\): 

\begin{theorem}(Schur; Babai; Diaconis, Shahshahani; Roichman.)\\
\label{thm:normalcayley}
Let \(G\) be a finite group, let \(S \subset G\) be an inverse-closed, conjugation-invariant subset of \(G\), and let \(\Gamma\) be the Cayley graph on \(G\) with generating set \(S\). Let \(A\) denote the adjacency matrix of \(\Gamma\). Let \((\rho_{1},V_{1}),\ldots,(\rho_{k},V_{k})\) be a complete set of non-isomorphic irreducible representations of \(G\) --- i.e., containing one representative from each isomorphism class of irreducible representations of \(G\). Let \(U_{i}\) be the sum of all submodules of the group module \(\mathbb{C}G\) which are isomorphic to \(V_{i}\). We have
\[\mathbb{C}G = \bigoplus_{i=1}^{k}U_{i},\]
and each \(U_{i}\) is an eigenspace of \(A\) with dimension \(\dim(V_{i})^{2}\) and eigenvalue
\[\lambda_{V_{i}} = \frac{1}{\dim(V_{i})}\sum_{s \in S} \chi_{i}(s),\]
where \(\chi_{i}(g) = \textrm{Trace}(\rho_{i}(g))\) denotes the {\em character} of the irreducible representation \((\rho_{i},V_{i})\).
\end{theorem}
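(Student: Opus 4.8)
The plan is to deduce everything from the Artin--Wedderburn structure theory of the semisimple algebra \(\mathbb{C}G\) together with Schur's lemma. First I would recall that, by Maschke's theorem, \(\mathbb{C}G\) is semisimple, so it decomposes as a direct sum of two-sided ideals \(\mathbb{C}G = \bigoplus_{i=1}^{k} U_{i}\), where \(U_{i} = \mathbb{C}G\,e_{i}\) for the central primitive idempotent \(e_{i}\) associated to \(V_{i}\), and \(U_{i} \cong M_{\dim V_{i}}(\mathbb{C})\) as an algebra. As a left \(\mathbb{C}G\)-module, \(U_{i}\) is isomorphic to \(V_{i}^{\oplus \dim V_{i}}\), and it is precisely the sum of all submodules of \(\mathbb{C}G\) isomorphic to \(V_{i}\) (the \(V_{i}\)-isotypic component); in particular \(\dim U_{i} = (\dim V_{i})^{2}\). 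This already gives the decomposition \(\mathbb{C}G = \bigoplus_{i} U_{i}\) and the dimension count.

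Next I would bring in the adjacency operator. Since \(\Gamma\) is a Cayley graph on \(G\) with generating set \(X\), its adjacency matrix \(A\) acts on \(\mathbb{C}G\) as left multiplication by \(a := \sum_{g \in X} g\). The crucial point --- and the only place the hypothesis on \(X\) enters --- is that \(a\) lies in the centre of \(\mathbb{C}G\): for every \(h \in G\) we have \(h a h^{-1} = \sum_{g \in X} hgh^{-1} = a\), because conjugation by \(h\) permutes \(X\). Because each \(U_{i}\) is a two-sided ideal it is in particular a left ideal, hence invariant under left multiplication by \(a\), so \(A\) preserves \(U_{i}\). Moreover \(a e_{i}\) is a central element of the matrix algebra \(U_{i} \cong M_{\dim V_{i}}(\mathbb{C})\), whose centre consists only of the scalar matrices, so \(a e_{i} = \lambda_{V_{i}} e_{i}\) for some scalar \(\lambda_{V_{i}} \in \mathbb{C}\) (equivalently, by Schur's lemma, \(a\) acts on the irreducible module \(V_{i}\) as a scalar). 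Therefore \(A\) acts on all of \(U_{i}\) as \(\lambda_{V_{i}}\) times the identity, i.e.\ \(U_{i}\) consists of eigenvectors of \(A\) with eigenvalue \(\lambda_{V_{i}}\).

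Finally, to identify \(\lambda_{V_{i}}\) I would take the trace of the action of \(a\) on \(V_{i}\): on one hand this trace equals \(\lambda_{V_{i}} \dim(V_{i})\), and on the other it equals \(\sum_{g \in X}\mathrm{Trace}(\rho_{i}(g)) = \sum_{g \in X}\chi_{i}(g)\), whence \(\lambda_{V_{i}} = \frac{1}{\dim V_{i}}\sum_{g \in X}\chi_{i}(g)\). The one point needing a word of care is the phrase ``each \(U_{i}\) is an eigenspace'': strictly, \(U_{i}\) lies inside the \(\lambda_{V_{i}}\)-eigenspace of \(A\), and that eigenspace is the direct sum of all \(U_{j}\) with \(\lambda_{V_{j}} = \lambda_{V_{i}}\); but since the \(U_{i}\) together span \(\mathbb{C}G\) and \(A\) acts as a scalar on each, this does furnish the full eigenspace decomposition of \(A\) with the stated eigenvalues and multiplicities. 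I do not expect any genuine obstacle here: the argument is a direct assembly of standard facts (semisimplicity of \(\mathbb{C}G\), the Wedderburn decomposition into isotypic components, centrality of class sums, and Schur's lemma), and all the content lies in the observation that a conjugation-invariant generating set yields a central element of the group algebra, which then acts as a scalar on each isotypic block.
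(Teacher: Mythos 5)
Your proof is correct and complete: the paper states this theorem without proof, citing it as classical (``Frobenius--Schur--others''), and your argument --- Wedderburn decomposition of \(\mathbb{C}G\) into isotypic two-sided ideals \(U_{i}\) of dimension \(\dim(V_{i})^{2}\), centrality of \(a=\sum_{g\in X}g\) coming from conjugation-invariance of \(X\), Schur's lemma forcing \(a\) to act as a scalar on each block, and the trace computation identifying that scalar as \(\frac{1}{\dim V_{i}}\sum_{g\in X}\chi_{i}(g)\) --- is exactly the standard argument being invoked. Your closing caveat, that \(U_{i}\) is literally the full eigenspace only when the \(\lambda_{V_{j}}\) are distinct, is also the right reading of the statement; the only hypothesis you leave tacit is that inverse-closedness of \(X\) is what makes \(\Gamma\) an undirected graph with symmetric (hence diagonalizable, real-spectrum) adjacency matrix, but this plays no role in the scalar-action argument itself.
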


Given \(x \in \mathbb{C}G\), its projection onto the eigenspace \(U_{i}\) can be found as follows. Write \(\textrm{Id} = \sum_{i=1}^{k}e_{i}\) where \(e_{i} \in U_{i}\) for each \(i \in [k]\). The \(e_{i}\)'s are called the \textit{primitive central idempotents} of \(\mathbb{C}G\); \(U_{i}\) is the two-sided ideal of \(\mathbb{C}G\) generated by \(e_{i}\), and \(e_{i}\) is given by the following formula:
\begin{equation}
\label{eq:idempotents}
e_{i} = \frac{\dim(V_{i})}{|G|} \sum_{g \in G} \chi_{i}(g^{-1}) g.
\end{equation}
For any \(x \in \mathbb{C}G\), \(x = \sum_{i=1}^{k}e_{i}x\) is the unique decomposition of \(x\) into a sum of elements of the \(U_{i}\)'s; in other words, the projection of \(x\) onto \(U_{i}\) is \(e_{i}x\).

\subsubsection*{Background on the representation theory of the symmetric group}
We now collect the results we need from the representation theory of \(S_{n}\); as in \cite{jointpaper}, our treatment follows \cite{JamesKerber} and \cite{sagan}. Readers who are familiar with the representation theory of \(S_{n}\) may wish to skip this section.

A \emph{partition} of \(n\) is a non-increasing sequence of positive integers summing to \(n\), i.e. a sequence $\alpha = (\alpha_1, \ldots, \alpha_k)$ with \(\alpha_{1} \geq \alpha_{2} \geq \ldots \geq \alpha_{k} \geq 1\) and \(\sum_{i=1}^{k} \alpha_{i}=n\); we write \(\alpha \vdash n\). For example, \((3,2,2) \vdash 7\); we sometimes use the shorthand \((3,2,2) = (3,2^{2})\). 

The \emph{cycle-type} of a permutation \(\sigma \in S_{n}\) is the partition of \(n\) obtained by expressing \(\sigma\) as a product of disjoint cycles and listing its cycle-lengths in non-increasing order. Two permutations in \(S_n\) are conjugate if and only if they have the same cycle-type, so the conjugacy classes of \(S_n\) are in explicit 1-1 correspondence with the partitions of \(n\). Moreover, there is an explicit 1-1 correspondence between partitions of \(n\) and isomorphism classes of irreducible representations of \(S_{n}\), which we now describe.

Let \(\alpha = (\alpha_{1}, \ldots, \alpha_{k})\) be a partiton of \(n\). The \emph{Young diagram} of $\alpha$ is
  an array of $n$ cells, having $k$ left-justified rows where row $i$
  contains $\alpha_i$ cells. For example, the Young diagram of the partition \((3,2^{2})\) is
\[\yng(3,2,2)\]
\\

An \(\alpha\)-\emph{tableau} is produced by placing the numbers \(1,2,\ldots,n\) into the cells of the Young diagram of \(\alpha\) in some order; for example,
\[\young(617,54,32)\]
\\
is a \((3,2^{2})\)-tableau. Two \(\alpha\)-tableaux are said to be \emph{row-equivalent} if for each row, they have the same numbers in that row. If an \(\alpha\)-tableau \(t\) has rows \(R_{1},\ldots,R_{k} \subset [n]\) and columns \(C_{1},\ldots,C_{l} \subset [n]\), we let \(R_{t} = S_{R_{1}} \times S_{R_{2}} \times \ldots \times S_{R_{k}}\) be the row-stablizer of \(t\) and \(C_t = S_{C_{1}} \times S_{C_{2}} \times \ldots \times S_{C_{l}}\) be the column-stabilizer.

An \(\alpha\)-\emph{tabloid} is an \(\alpha\)-tableau with unordered row entries (or formally, a row-equivalence class of \(\alpha\)-tableaux); given a tableau \(t\), we write \([t]\) for the tabloid it produces. For example, the \((3,2^{2})\)-tableau above produces the following \((3,2^{2})\)-tabloid\\
\begin{center}
\begin{tabular}{ccc} \{1 & 6 & 7\}\\
\{4 &\ 5\} &\\
\{2 &\ 3\} &\\
\end{tabular}\\
\end{center}
\vspace{\baselineskip}
Consider the natural left action of \(S_{n}\) on the set \(X^{\alpha}\) of all \(\alpha\)-tabloids; let \(M^{\alpha} = \mathbb{C}[X^{\alpha}]\) be the corresponding permutation module, i.e. the complex vector space with basis \(X^{\alpha}\) and \(S_{n}\) action given by extending this action linearly. Given an \(\alpha\)-tableau \(t\), we define the corresponding \(\alpha\)-\textit{polytabloid}
\[e_{t} := \sum_{\pi \in C_{t}} \epsilon(\pi) \pi[t].\]
We define the \textit{Specht module} \(S^{\alpha}\) to be the submodule of \(M^{\alpha}\) spanned by the \(\alpha\)-polytabloids:
\[S^{\alpha} = \textrm{Span}\{e_t:\ t \textrm{ is an } \alpha\textrm{-tableau}\}.\]
A central observation in the representation theory of \(S_{n}\) is that \textit{the Specht modules are a complete set of pairwise non-isomorphic, irreducible representations of} $S_n$. Hence, any irreducible representation \(\rho\) of \(S_{n}\) is isomorphic to some \(S^{\alpha}\). For example, \(S^{(n)} = M^{(n)}\) is the trivial representation; \(M^{(1^{n})}\) is the left-regular representation, and \(S^{(1^{n})}\) is the sign representation \(S\).

We say that a tableau is \textit{standard} if the numbers strictly increase along each row and down each column. It turns out that for any partition \(\alpha\) of \(n\),
\[\{e_{t}: t \textrm{ is a standard }\alpha \textrm{-tableau}\}\]
is a basis for the Specht module \(S^{\alpha}\).

Given a partition \(\alpha\) of \(n\), for each cell \((i,j)\) in its Young diagram, we define the {\em hook length} \((h_{i,j}^{\alpha})\) to be the number of cells in its `hook' (the set of cells in the same row to the right of it or in the same column below it, including itself) --- for example, the hook lengths of \((3,2^{2})\) are as follows:
\[\young(541,32,21)\]
\\

The dimension \(f^{\alpha}\) of the Specht module \(S^{\alpha}\) is given by the following formula
\begin{equation}
\label{eq:hook}
f^{\alpha} = n!/\prod{(\textrm{hook lengths of } \alpha)}.
\end{equation}

From now on we will write \([\alpha]\) for the equivalence class of the irreducible representation \(S^{\alpha}\), \(\chi_{\alpha}\) for the irreducible character \(\chi_{S^{\alpha}}\), and \(\xi_{\alpha}\) for the character of the permutation representation \(M^{\alpha}\). Notice that the set of \(\alpha\)-tabloids form a basis for \(M^{\alpha}\), and therefore \(\xi_{\alpha}(\sigma)\), the trace of the corresponding permutation representation at \(\sigma\), is precisely the number of \(\alpha\)-tabloids fixed by \(\sigma\).

If \(U \in [\alpha],\ V \in [\beta]\), we define \([\alpha]+[\beta]\) to be the equivalence class of \(U\oplus V\), and \([\alpha] \otimes [\beta]\) to be the equivalence class of \(U \otimes V\).

The Branching Theorem (see \cite{JamesKerber} \S 2.4) states that for any partition \(\alpha\) of \(n\), the restriction \([\alpha] \downarrow S_{n-1}\) is isomorphic to a direct sum of those irreducible representations \([\beta]\) of \(S_{n-1}\) such that the Young diagram of \(\beta\) can be obtained from that of \(\alpha\) by deleting a single cell, i.e., if \(\alpha^{i-}\) is the partition whose Young diagram is obtained by deleting the cell at the end of the \(i\)th row of that of \(\alpha\), then
\begin{equation}
\label{eq:restriction}
[\alpha] \downarrow S_{n-1} = \sum_{i: \alpha_{i} > \alpha_{i-1}}[\alpha^{i-}].
\end{equation}
For example, if \(\alpha = (3,2^{2})\), we obtain:
\[[3,2^{2}] \downarrow S_{6} = \left[\ \yng(2,2,2)\ \right]+\left[\ \yng(3,2,1)\ \right] = [2^{3}]+[3,2,1].\]

For any partition \(\alpha\) of \(n\), we have \(S^{(1^{n})} \otimes S^{\alpha} \cong S^{\alpha'}\), where \(\alpha'\) is the \textrm{transpose} of \(\alpha\), the partition of \(n\) with Young diagram obtained by interchanging rows with columns in the Young diagram of \(\alpha\). Hence, \([1^{n}] \otimes [\alpha] = [\alpha']\), and \(\chi_{\alpha'} = \epsilon \cdot \chi_{\alpha}\). For example, we obtain:
\[[3,2,2]\otimes [1^{7}] = \left[\ \yng(3,2,2)\ \right]' = \left[\ \yng(3,3,1)\ \right] = [3,3,1].\]

We now explain how the permutation modules \(M^{\beta}\) decompose into irreducibles.

\begin{definition}
Let \(\alpha,\beta\) be partitions of \(n\). A \emph{generalized} \(\alpha\)-\emph{tableau} is produced by replacing each dot in the Young diagram of \(\alpha\) with a number between 1 and \(n\); if a generalized \(\alpha\)-tableau has \(\beta_{i}\) \(i\)'s \((1 \leq i \leq n)\) it is said to have \emph{content} \(\beta\). A generalized \(\alpha\)-tableau is said to be \emph{semistandard} if the numbers are non-decreasing along each row and strictly increasing down each column.
\end{definition}

  \begin{definition} \label{def:kostka}
  Let $\alpha, \beta$ be partitions of $n$. The {\em Kostka number}
  $K_{\alpha,\beta}$ is the number of semistandard generalized $\alpha$-tableaux
  with content $\beta$.
\end{definition}

\textit{Young's Rule} states that for any partition \(\beta\) of \(n\), the permutation module \(M^{\beta}\) decomposes into irreducibles as follows:
\[M^{\beta} \cong \oplus_{\alpha \vdash n} K_{\alpha, \beta} S^{\alpha}.\]

For example, \(M^{(n-1,1)}\), which corresponds to the natural permutation action of \(S_{n}\) on \([n]\), decomposes as
\[M^{(n-1,1)} \cong S^{(n-1,1)} \oplus S^{(n)},\]
and therefore
\begin{equation}
\label{eq:permcharacter}
\xi_{(n-1,1)} = \chi_{(n-1,1)}+1.
\end{equation}

We now return to considering the derangement graph. Write \(U_{\alpha}\) for the sum of all copies of \(S^{\alpha}\) in \(\mathbb{C}S_{n}\). Note that \(U_{(n)} = \textrm{Span}\{\boldsymbol{f}\}\) is the subspace of constant vectors in \(\mathbb{C}S_{n}\). Applying Theorem \ref{thm:normalcayley} to the derangement graph \(\Gamma\), we have
\[\mathbb{C}S_{n}=\bigoplus_{\alpha \vdash n} U_{\alpha},\]
and each \(U_{\alpha}\) is an eigenspace of the derangement graph, with dimension \(\dim(U_{\alpha}) = (f^{\alpha})^{2}\) and corresponding eigenvalue
\begin{equation}
\label{eq:evalformula}
\lambda_{\alpha} = \frac{1}{f^{\alpha}} \sum_{\sigma \in \mathcal{D}_{n}} \chi_{\alpha}(\sigma).
\end{equation}
We will use the following result, a variant of a result in \cite{JamesKerber}; for the reader's convenience, we include a proof using the Branching Theorem and the Hook Formula.
\begin{lemma}
\label{lemma:lowdimreps}
For \(n \geq 9\), the only Specht modules \(S^{\alpha}\) of dimension \(f^{\alpha} <{n-1 \choose 2}-1\) are as follows:
\begin{itemize}
\item \(S^{(n)}\) (the trivial representation), dimension 1;
\item \(S^{(1^{n})}\) (the sign representation \(S\)), dimension 1;
\item \(S^{(n-1,1)}\), dimension \(n-1\);
\item \(S^{(2,1^{n-2})}\) (\(\cong S \otimes S^{(n-1,1)}\)), dimension \(n-1\).
\begin{flushright}\((*)\)\end{flushright}
\end{itemize}
\end{lemma}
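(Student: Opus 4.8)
The plan is to induct on $n$, combining the Branching Theorem (\ref{eq:restriction}) with a few direct applications of the Hook Formula (\ref{eq:hook}). Call a partition of $m$ \emph{small} if it is one of $(m),\,(1^{m}),\,(m-1,1),\,(2,1^{m-2})$; these are exactly the four partitions the lemma asserts to be the low-dimensional ones, and smallness is preserved under transposition. Since $f^{\alpha'}=f^{\alpha}$, I may always pass from $\alpha$ to its transpose. The inductive hypothesis, for a fixed $n$, will be: every non-small partition $\beta$ of $m$ with $9\le m<n$ satisfies $f^{\beta}\ge{m-1\choose 2}-1$.

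The first step is to dispose of four ``borderline'' partitions of $n$ directly, using (\ref{eq:hook}): $f^{(n-2,2)}=n(n-3)/2={n-1\choose 2}-1$ and $f^{(n-2,1,1)}={n-1\choose 2}$, whence (by transposition) $f^{(2,2,1^{n-4})}={n-1\choose 2}-1$ and $f^{(3,1^{n-3})}={n-1\choose 2}$; all four are $\ge{n-1\choose 2}-1$. Call these four partitions, together with the small ones, the \emph{exceptional} partitions of $n$. The reason for this list is the following observation: the exceptional partitions of $n$ are precisely those whose Young diagram is obtained from that of a small partition of $n-1$ by adding one cell --- adding a cell to $(n-1)$ or $(1^{n-1})$ produces a small partition of $n$, adding a cell to $(n-2,1)$ produces $(n-1,1)$, $(n-2,2)$ or $(n-2,1,1)$, and adding a cell to $(2,1^{n-3})$ produces the three transposes of these. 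Consequently, by (\ref{eq:restriction}), if $\alpha$ is a \emph{non}-exceptional partition of $n$ then every partition of $n-1$ occurring in $[\alpha]\downarrow S_{n-1}$ is non-small.

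Then, given a non-exceptional partition $\alpha$ of $n$ with $n\ge 11$ (the cases $n\in\{9,10\}$ being a routine finite verification via (\ref{eq:hook}), or a glance at standard character tables), I would argue as follows. If $\alpha$ is not a rectangle, its parts take at least two distinct values, so its Young diagram has at least two removable cells; hence $[\alpha]\downarrow S_{n-1}$ has at least two distinct constituents, each non-small by the previous paragraph, and the inductive hypothesis gives
\[f^{\alpha}\ge 2\left({n-2\choose 2}-1\right)=(n-2)(n-3)-2\ge{n-1\choose 2}-1,\]
the last inequality being equivalent to $(n-2)(n-5)\ge 2$. If instead $\alpha=(a^{b})$ is a rectangle, then $a,b\ge 2$ (else $\alpha$ is small), so it has exactly one removable cell, and $\beta:=\alpha^{b-}=(a^{b-1},a-1)$ is a non-rectangular partition of $n-1$ which one checks is itself non-exceptional whenever $n\ge 9$. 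Applying the non-rectangular case to $\beta$ then yields
\[f^{\alpha}=f^{\beta}\ge 2\left({n-3\choose 2}-1\right)\ge{n-1\choose 2}-1,\]
the last inequality reducing to $n^{2}-11n+20\ge 0$, which holds for $n\ge 9$.

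I expect the main obstacle to be the bookkeeping rather than any single estimate: one must determine exactly which partitions restrict to contain one of the four small $S_{n-1}$-modules $S^{(n-1)},S^{(1^{n-1})},S^{(n-2,1)},S^{(2,1^{n-3})}$, so that the clean ``at least two removable cells'' bound applies to every other partition, and one must handle rectangular $\alpha$ by iterating the branching step once, since a rectangle has only one removable cell. The Hook-Formula computations for the four borderline partitions, and the verification that the peeled rectangle $\beta$ is non-exceptional, are short but must be carried out with some care; the remaining inequalities are immediate.
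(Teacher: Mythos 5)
Your proposal is correct and is essentially the paper's own argument: the same induction with base cases $n=9,10$, the same Branching-plus-Hook-Formula mechanism, the same eight ``exceptional'' partitions (your borderline four are exactly the paper's table of new restrictions containing a $(*)$-constituent, with the same dimensions ${n-1\choose 2}-1$ and ${n-1\choose 2}$), and the same two-constituent bound $2\bigl({n-2\choose 2}-1\bigr)$, resp.\ $2\bigl({n-3\choose 2}-1\bigr)$ for rectangles. Your peeling of one cell from a rectangle and reapplying the non-rectangular case is just a repackaging of the paper's explicit computation of $[s^{t}]\downarrow S_{n-2}$, so the two proofs coincide in substance.
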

This is well-known, but for completeness we include a proof using the Branching Theorem and the Hook Formula.
\begin{proof}
By direct calculation using (\ref{eq:hook}), the lemma can be verified for \(n=9,10\). We proceed by induction. Assume the lemma holds for \(n-2,n-1\); we will prove it for \(n\). Let \(\alpha\) be a partition of \(n\) such that \(f^{\alpha} < {n-1 \choose 2}-1\). Consider the restriction \([\alpha] \downarrow S_{n-1}\), which has the same dimension. First suppose \([\alpha]\downarrow S_{n-1}\) is reducible. If it has one of our 4 irreducible representations (\(*\)) as a constituent, then by (\ref{eq:restriction}), the possibilies for \(\alpha\) are as follows:
\begin{center}
\begin{tabular}{l|l}
constituent & possibilies for \(\alpha\)\\
\hline
\([n-1]\) & \((n),(n-1,1)\)\\
\([1^{n-1}]\) & \((1^{n}),(2,1^{n-1})\)\\
\([n-2,1]\) & \((n-1,1),(n-2,2),(n-2,1,1)\)\\
\([2,1^{n-3}]\) & \((2,1^{n-2}),(2,2,1^{n-4}),(3,1^{n-3})\)
\end{tabular}
\end{center}

But using (\ref{eq:hook}), we see that the new irreducible representations above all have dimension at least \({n-1 \choose 2}-1\):
\begin{center}
\begin{tabular}{l|l}
\(\alpha\) & \(f^{\alpha}\)\\
\hline
\((n-2,2),(2,2,1^{n-4})\) & \({n-1 \choose 2}-1\)\\
\((n-2,1,1),(3,1^{n-3})\) & \({n-1 \choose 2}\)\\
\end{tabular}\\
\end{center}

Hence, none of these are constituents of \([\alpha] \downarrow S_{n-1}\). So we may assume that the irreducible constituents of \([\alpha] \downarrow S_{n-1}\) do not include any of our 4 irreducible representations (\(*\)), so by the induction hypothesis for \(n-1\), each has dimension at least \({n-2\choose 2}-1\). But \(2({n-2\choose 2}-1) \geq {n-1 \choose 2}-1\) provided \(n \geq 11\), so there is just one, i.e. \([\alpha] \downarrow S_{n-1}\) is irreducible. Therefore \([\alpha] = [s^{t}]\) for some \(s,t \in \mathbb{N}\) with \(st=n\), i.e. it has square Young diagram. Now consider
\[[\alpha]\downarrow S_{n-2} = [s^{t-1},s-2] + [s^{t-2},s-1,s-1].\]
Note that neither of these 2 irreducible constituents are any of our 4 irreducible representations \((*)\), so by the induction hypothesis for \(n-2\), each has dimension at least \({n-3 \choose 2}-1\). But \(2({n-3\choose 2}-1) \geq {n-1 \choose 2}-1\) for \(n \geq 11\), contradicting \(\dim ([\alpha] \downarrow S_{n-2}) < {n-1 \choose 2}-1\).
\end{proof}

If \(\alpha\) is any partition of \(n\) whose Specht module has high dimension \(f^{\alpha} \geq {n-1 \choose 2}-1\), we may bound \(|\lambda_{\alpha}|\) using the `trace method' --- specifically, we consider the trace of \(A^2\):
\begin{lemma}
\label{lemma:sumofsquaresofevals}
Let \(H\) be a graph on \(N\) vertices whose adjancency matrix \(A\) has eigenvalues \(\lambda_{1} \geq \lambda_{2} \geq \ldots \geq \lambda_{N}\); then
\[\sum_{i=1}^{N} \lambda_{i}^{2} = \textrm{Trace}(A^2) = 2e(H).\]
\end{lemma}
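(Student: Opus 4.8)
The plan is to recognise the left-hand side as the trace of $A^2$ and then evaluate that trace combinatorially. Since $\Gamma$ is a graph, its adjacency matrix $A$ is real and symmetric, hence diagonalizable with real spectrum $\lambda_1 \geq \lambda_2 \geq \ldots \geq \lambda_N$; consequently $A^2$ has eigenvalues $\lambda_1^2, \ldots, \lambda_N^2$, and so $\sum_{i=1}^N \lambda_i^2 = \mathrm{Trace}(A^2)$. This reduces the whole lemma to computing $\mathrm{Trace}(A^2)$.

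Next I would expand the trace entrywise. We have $\mathrm{Trace}(A^2) = \sum_{i=1}^N (A^2)_{ii} = \sum_{i=1}^N \sum_{j=1}^N A_{ij}A_{ji}$, and using the symmetry $A_{ji} = A_{ij}$ this equals $\sum_{i,j} A_{ij}^2$. Because $\Gamma$ is a simple graph, $A$ is a $0$–$1$ matrix, so $A_{ij}^2 = A_{ij}$ for all $i,j$, whence $\mathrm{Trace}(A^2) = \sum_{i,j} A_{ij}$. This last sum counts the ordered pairs $(i,j)$ with $ij \in E(\Gamma)$; each edge is counted exactly twice, so $\sum_{i,j} A_{ij} = 2e(\Gamma)$. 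Combining gives $\sum_{i=1}^N \lambda_i^2 = 2e(\Gamma)$.

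There is no substantive obstacle here; the argument is a one-line trace identity. The only point meriting a word of care is that the graph is loopless and simple, so that $A$ genuinely has entries in $\{0,1\}$ and zero diagonal — this certainly holds for the derangement graph and, more generally, for any Cayley graph on an inverse-closed generating set avoiding the identity, which is the only setting in which the lemma will be invoked.
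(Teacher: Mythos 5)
Your proof is correct and follows essentially the same route as the paper: both identify $\sum_i \lambda_i^2$ with $\mathrm{Trace}(A^2)$ via diagonalizability of the real symmetric matrix $A$, and then evaluate the trace entrywise using $A_{ij}^2 = A_{ij}$ to obtain $2e(\Gamma)$. Nothing further is needed.
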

This is well-known; we include a proof for completeness.
\begin{proof}
Diagonalize \(A\): there exists a real invertible matrix \(P\) such that \(A = P^{-1}DP\), where \(D\) is the diagonal matrix
\[D = \left(\begin{array}{cccc} \lambda_{1} & 0 & \ldots & 0\\
0 & \lambda_{2} && 0\\
\vdots & & \ddots & \vdots\\
0 & & \ldots & \lambda_{N} \end{array} \right).\] 
We have \(A^{2} = P^{-1}D^{2}P\), and therefore
\[2e(H) = \sum_{i,j=1}^{N}A_{i,j} = \sum_{i,j=1}^{N} A_{i,j}^{2} = \textrm{Trace}(A^{2}) = \textrm{Trace}(P^{-1}D^{2}P) = \textrm{Trace}(D^{2}) = \sum_{i=1}^{N}\lambda_{i}^{2},\]
as required.
\end{proof}
Hence, the eigenvalues of the derangement graph satisfy:
\[\sum_{\alpha \vdash n} (f^{\alpha}\lambda_{\alpha})^{2} = 2e(\Gamma) = n! d_{n} = (n!)^{2} (1/e + o(1)),\]
so for each partition \(\alpha\) of \(n\),
\[|\lambda_{\alpha}| \leq \frac{\sqrt{n! d_{n}}}{f^{\alpha}} = \frac{n!}{f^{\alpha}}\sqrt{1/e+o(1)}.\]
Let
\[\mathcal{M} = \left\{\alpha \vdash n:\ f^\alpha \geq {n-1 \choose 2}+1\right\};\]
we have
\[\max_{\alpha \in \mathcal{M}}|\lambda_{\alpha}| \leq O((n-2)!).\]
For each of the other Specht modules \((*)\), we now explicitly calculate the corresponding eigenvalue using (\ref{eq:evalformula}).

For the trivial module, \(\chi_{(n)} \equiv 1\), so
\[\lambda_{(n)} = d_{n}.\]

For the sign module \(S^{(1^n)}\), \(\chi_{(1^{n})} = \epsilon\), so
\[\lambda_{(1^{n})} = \sum_{\sigma \in \mathcal{D}_{n}} \epsilon(\sigma) = e_{n}-o_{n}\]
where \(e_{n},o_{n}\) are the number of even and odd derangements of \([n]\), respectively. It is well known that for any \(n \in \mathbb{N}\),
\begin{equation}
\label{eq:derangements}
e_{n}-o_{n} = (-1)^{n-1}(n-1).
\end{equation}
To see this, note that an odd permutation \(\sigma \in S_{n}\) without fixed points can be written as \((i\ n)\rho\), where \(\sigma(n)=i\), and \(\rho\) is either an even permutation of \([n-1]\setminus \{i\}\) with no fixed points (if \(\sigma(i)=n\)), or an even permutation of \([n-1]\) with no fixed points (if \(\sigma(i) \neq n\)). Conversely, for any \(i \neq n\), if \(\rho\) is any even permutation of \([n-1]\) with no fixed points or any even permutation of \([n-1]\setminus \{i\}\) with no fixed points, then \((i\ n) \rho\) is a permutation of \([n]\) with no fixed points taking \(n \mapsto i\). Hence, for all \(n \geq 3\),
\[o_{n} = (n-1)(e_{n-1}+e_{n-2}).\]
Similarly,
\[e_{n} = (n-1)(o_{n-1}+o_{n-2}).\]
Equation (\ref{eq:derangements}) follows by induction on \(n\).

Hence, we have:
\[\lambda_{(1^{n})} = (-1)^{n-1}(n-1).\]

For the partition \((n-1,1)\), from (\ref{eq:permcharacter}) we have:
\[\chi_{(n-1,1)}(\sigma) = \xi_{(n-1,1)}(\sigma)-1 = \#\{\textrm{fixed points of } \sigma\} -1,\]
so we obtain
\[\lambda_{(n-1,1)} = \frac{1}{n-1}\sum_{\sigma \in \mathcal{D}_{n}}(-1) = -\frac{d_{n}}{n-1}.\]

For \(S^{(2,1^{n-2})} \cong S^{(1^{n})} \otimes S^{(n-1,1)}\), we have \(\chi_{(2,1^{n-2})} = \epsilon \cdot \chi_{(n-1,1)}\), so
\[\chi_{(2,1^{n-2})}(\sigma) = \epsilon(\sigma)(\#\{\textrm{fixed points of }\sigma\}-1),\]
and therefore
\[\lambda_{(2,1^{n-2})} = \frac{1}{n-1}\sum_{\sigma \in \mathcal{D}_{n}} -\epsilon(\sigma) = -\frac{e_{n}-o_{n}}{n-1} = (-1)^{n}.\]

To summarize, we obtain:\\
\begin{center}

\begin{tabular}{l|l}
\(\alpha\) & \(\lambda_{\alpha}\)\\
\hline
\((n)\) & \(d_{n}\)\\
\((1^{n})\) & \((-1)^{n-1}(n-1)\)\\
\((n-1,1)\) & \(-d_{n}/(n-1)\)\\
\((2,1^{n-2})\) & \((-1)^{n}\)
\end{tabular}\\
\end{center}
\vspace{\baselineskip}
Hence, \(U_{(n)}\) is the \(d_{n}\)-eigenspace, \(U_{(n-1,1)}\) is the \(-d_{n}/(n-1)\)-eigenspace, and all other eigenvalues are \(O((n-2)!)\). Hence, Leader's conjecture follows (for \(n\) sufficiently large) by applying Theorem \ref{thm:crosshoffman} to the derangement graph. It is easy to check that \(\nu = d_{n}/(n-1)\) for all \(n \geq 4\), giving

\begin{theorem}
\label{thm:leaderconj}
For \(n \geq 4\), if \(\mathcal{A},\mathcal{B} \subset S_{n}\) are cross-intersecting, then
\[|\mathcal{A}||\mathcal{B}| \leq ((n-1)!)^{2}.\]
\end{theorem}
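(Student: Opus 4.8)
The plan is to feed the derangement graph into Theorem~\ref{thm:crosshoffman}(i). Recall that $\Gamma$ is $d_{n}$-regular on $N = n!$ vertices, and that a cross-intersecting pair $\mathcal{A},\mathcal{B} \subset S_{n}$ is exactly a pair of vertex-sets of $\Gamma$ with no edges between them. So, writing $\nu = \max(|\lambda_{2}|,|\lambda_{N}|)$ for the largest modulus of a non-principal eigenvalue of $\Gamma$, Theorem~\ref{thm:crosshoffman}(i) gives $\sqrt{|\mathcal{A}||\mathcal{B}|} \leq \frac{\nu}{d_{n}+\nu}\,n!$. Once we know $\nu = d_{n}/(n-1)$, a one-line computation,
\[
\frac{\nu}{d_{n}+\nu}\,n! \;=\; \frac{d_{n}/(n-1)}{d_{n}\bigl(1+\tfrac{1}{n-1}\bigr)}\,n! \;=\; \frac{1}{n}\,n! \;=\; (n-1)!,
\]
finishes the proof after squaring. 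Note that part~(ii) of Theorem~\ref{thm:crosshoffman} is not needed here, since Theorem~\ref{thm:leaderconj} as stated asks only for the inequality, not for the description of the extremal pairs.

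The entire content is therefore the claim that $\nu = d_{n}/(n-1)$ for all $n \geq 4$; equivalently, that $|\lambda_{\alpha}| \leq d_{n}/(n-1)$ for every partition $\alpha \vdash n$ other than $(n)$, where $\lambda_{\alpha}$ is as in~(\ref{eq:evalformula}). We have already evaluated $\lambda_{(n-1,1)} = -d_{n}/(n-1)$, so this bound, if true, is attained. For $n$ reasonably large I would split on the dimension $f^{\alpha}$. If $f^{\alpha} \geq \binom{n-1}{2}-1$, then Lemma~\ref{lemma:sumofsquaresofevals} applied to $\Gamma$ gives $\sum_{\beta \vdash n}(f^{\beta}\lambda_{\beta})^{2} = 2e(\Gamma) = n!\,d_{n}$, whence $|\lambda_{\alpha}| \leq \sqrt{n!\,d_{n}}/f^{\alpha} = O((n-2)!)$, which is $o(d_{n}/(n-1))$. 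If instead $f^{\alpha} < \binom{n-1}{2}-1$, then Lemma~\ref{lemma:lowdimreps} pins $\alpha$ down to one of $(n),(1^{n}),(n-1,1),(2,1^{n-2})$, and the four eigenvalues $d_{n}$, $(-1)^{n-1}(n-1)$, $-d_{n}/(n-1)$, $(-1)^{n}$ computed above all have modulus at most $d_{n}/(n-1)$. Since Lemma~\ref{lemma:lowdimreps} holds for $n \geq 9$ and the estimate $\sqrt{n!\,d_{n}}\,(n-1) \leq d_{n}\bigl(\binom{n-1}{2}-1\bigr)$ is easily checked to hold from then on, this settles every $n \geq 9$.

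It then remains to handle the finitely many values $4 \leq n \leq 8$ by hand: for each such $n$ the partitions of $n$ number at most $22$, and one computes the eigenvalues directly from~(\ref{eq:evalformula}) using the character table of $S_{n}$ (restricting each sum to the conjugacy classes consisting of derangements, i.e.\ those with no fixed points), verifying in each case that the smallest eigenvalue is $-d_{n}/(n-1)$ and has the largest modulus among the non-principal ones. I expect this bookkeeping, together with fixing the precise cutoff between the two regimes, to be the only genuine work; there is no conceptual obstacle, since Theorems~\ref{thm:crosshoffman} and~\ref{thm:normalcayley} and Lemmas~\ref{lemma:lowdimreps} and~\ref{lemma:sumofsquaresofevals} already supply everything essential.
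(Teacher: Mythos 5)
Your proposal is correct and follows essentially the same route as the paper: apply Theorem~\ref{thm:crosshoffman}(i) to the derangement graph, bound the eigenvalues of high-dimensional Specht modules via Lemma~\ref{lemma:sumofsquaresofevals}, handle the low-dimensional ones via Lemma~\ref{lemma:lowdimreps} and the explicit values $d_n$, $(-1)^{n-1}(n-1)$, $-d_n/(n-1)$, $(-1)^n$, so that $\nu = d_n/(n-1)$ and $\frac{\nu}{d_n+\nu}n! = (n-1)!$. The paper likewise leaves the small cases ($4 \leq n \leq 8$, where the asymptotic bound is not yet available) to a direct check, exactly as you propose.
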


If equality holds, then by Theorem \ref{thm:crosshoffman} part \textit{(ii)}, the characteristic vectors \(v_{\mathcal{A}},v_{\mathcal{B}}\) must lie in the direct sum of the \(d_{n}\) and \(-d_{n}/(n-1)\)-eigenspaces. It can be checked that for \(n \geq 5\), \(|\lambda_{\alpha}| < d_{n}/(n-1)\ \forall \alpha \neq (n),(n-1,1)\), so the \(d_{n}\) eigenspace is precisely \(U_{(n)}\) and the \(-d/(n-1)\)-eigenspace is precisely \(U_{(n-1,1)}\). But we have:

\begin{lemma}
For \(i,j \in [n]\), let \(v_{i \mapsto j} = v_{\{\sigma \in S_{n}:\ \sigma(i)=j\}} \in \mathbb{C}S_n\) be the characteristic vector of the 1-coset \(\{\sigma \in S_{n}:\ \sigma(i)=j\}\). Then
\[U_{(n)} \oplus U_{(n-1,1)} = \textrm{Span}\{v_{i \mapsto j}\ : i,j \in [n]\}.\]
\end{lemma}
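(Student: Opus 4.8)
Identifying $\mathbb{C}S_n$ with the space of functions on $S_n$, the function attached to $v_{i\mapsto j}$ is $\sigma\mapsto[\sigma(i)=j]$, so $W:=\mathrm{Span}\{v_{i\mapsto j}:i,j\in[n]\}$ is precisely the space of functions of ``degree at most one'', i.e.\ those of the form $\sigma\mapsto\sum_{i=1}^{n}c_{i,\sigma(i)}$ for some matrix $(c_{ij})$. The plan is to realise $W$ as the image of an $S_n\times S_n$-equivariant map and then to read off its irreducible constituents, using the standard fact that, as a module for $S_n\times S_n$ acting on $\mathbb{C}S_n$ by left and right translation, $\mathbb{C}S_n=\bigoplus_{\gamma\vdash n}U_\gamma$ with the $U_\gamma\cong S^\gamma\boxtimes S^\gamma$ irreducible and pairwise non-isomorphic.

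First I would introduce the map $T\colon\mathbb{C}S_n\to\mathbb{C}^{[n]\times[n]}$ sending $\sigma$ to the indicator function of $\{(i,j):\sigma(i)=j\}$, i.e.\ to its permutation matrix. Equipping $\mathbb{C}^{[n]\times[n]}$ with the natural $S_n\times S_n$-action (the first factor acting on the second coordinate, the second factor on the first), a one-line check shows that $T$ is equivariant, where $S_n\times S_n$ acts on $\mathbb{C}S_n$ by $(\tau,\rho)\cdot\sigma=\tau\sigma\rho^{-1}$. Taking adjoints with respect to the standard bases, $T^{*}$ sends the basis vector $\delta_{(i,j)}$ to $\sum_{\sigma:\,\sigma(i)=j}\sigma=v_{i\mapsto j}$, so $W=\mathrm{Image}(T^{*})$; in particular $W$ is an $S_n\times S_n$-submodule of $\mathbb{C}S_n$.

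Next I would pin down which $U_\gamma$ occur in $W$. As an $S_n\times S_n$-module, $\mathbb{C}^{[n]\times[n]}\cong M^{(n-1,1)}\boxtimes M^{(n-1,1)}$, and by Young's Rule $M^{(n-1,1)}\cong S^{(n)}\oplus S^{(n-1,1)}$; hence every irreducible constituent of $\mathbb{C}^{[n]\times[n]}$ is of the form $S^\alpha\boxtimes S^\beta$ with $\alpha,\beta\in\{(n),(n-1,1)\}$. Since $W$, being the image of the module map $T^{*}$, is isomorphic to a submodule of $\mathbb{C}^{[n]\times[n]}$, its constituents are of this form too; but the constituents of $\mathbb{C}S_n$ are only the ``diagonal'' modules $S^\gamma\boxtimes S^\gamma$, so the ones of the permitted form are exactly $S^{(n)}\boxtimes S^{(n)}=U_{(n)}$ and $S^{(n-1,1)}\boxtimes S^{(n-1,1)}=U_{(n-1,1)}$. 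Therefore $W\subseteq U_{(n)}\oplus U_{(n-1,1)}$.

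For the reverse inclusion, $U_{(n)}$ and $U_{(n-1,1)}$ are non-isomorphic irreducible $S_n\times S_n$-modules, so the only submodules of their direct sum are $0$, $U_{(n)}$, $U_{(n-1,1)}$ and $U_{(n)}\oplus U_{(n-1,1)}$. Now $W$ contains $\mathbf{f}=\sum_{j}v_{1\mapsto j}$, which spans $U_{(n)}$, and also the vector $v_{1\mapsto 1}$, which is not a constant function (it is the indicator of $\mathrm{Stab}(1)\neq S_n$) and hence is not in $U_{(n)}$. This rules out the first three options and gives $W=U_{(n)}\oplus U_{(n-1,1)}$. The step I expect to need the most care is the module bookkeeping of the third paragraph: checking carefully that $\mathbb{C}^{[n]\times[n]}\cong M^{(n-1,1)}\boxtimes M^{(n-1,1)}$ for the chosen actions, and that passing to a subquotient module inside $\mathbb{C}S_n$ discards all of the cross terms $S^{(n)}\boxtimes S^{(n-1,1)}$ and $S^{(n-1,1)}\boxtimes S^{(n)}$. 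A more computational alternative avoids $T$ altogether: apply the idempotent formula (\ref{eq:idempotents}) directly to $v_{1\mapsto 1}$, using that it is the indicator of a point-stabiliser and that $[\gamma]\downarrow S_{n-1}$ contains the trivial representation only for $\gamma=(n)$ and $\gamma=(n-1,1)$; this works but requires evaluating the resulting character sums, so the module-theoretic route seems preferable. As a sanity check, both $W$ and $U_{(n)}\oplus U_{(n-1,1)}$ have dimension $1+(n-1)^{2}=n^{2}-2n+2$, which is also the dimension of the span of the $n\times n$ permutation matrices.
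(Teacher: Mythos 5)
Your argument is correct, but it takes a genuinely different route from the paper's. You exploit the two-sided ($S_n\times S_n$-) module structure: $W=\mathrm{Span}\{v_{i\mapsto j}\}$ is the image of the equivariant map $T^{*}$ from $\mathbb{C}^{[n]\times[n]}\cong M^{(n-1,1)}\boxtimes M^{(n-1,1)}$, and you invoke the Wedderburn-type fact that $\mathbb{C}S_n$ is multiplicity-free as a bimodule with irreducible components $U_\gamma\cong S^{\gamma}\boxtimes S^{\gamma}$; comparing constituents gives $W\subseteq U_{(n)}\oplus U_{(n-1,1)}$, and irreducibility of these two bimodule summands together with the explicit vectors $\mathbf{f}=\sum_j v_{1\mapsto j}$ and $v_{1\mapsto 1}$ forces equality. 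The paper instead argues one-sidedly: for the forward inclusion, each ``row'' $\mathrm{Span}\{v_{i\mapsto j}:j\in[n]\}$ is a copy of $M^{(n-1,1)}\cong S^{(n)}\oplus S^{(n-1,1)}$, hence lies in $U_{(n)}\oplus U_{(n-1,1)}$; for the reverse, it cites the fact (James--Liebeck) that isomorphic submodules of $\mathbb{C}G$ are related by right multiplication, so the right translates of a single row (which stay inside $W$, since right translation permutes the $v_{i\mapsto j}$) already sweep out all copies of $S^{(n-1,1)}$ together with the constants. Your route buys a cleaner, more symmetric proof — both inclusions become bookkeeping once the bimodule decomposition is available, and the reverse inclusion needs only two vectors — at the cost of importing facts the paper never states: the $S_n\times S_n$-irreducibility of the $U_\gamma$ and the self-duality of $S_n$-irreducibles (so that $U_\gamma\cong S^{\gamma}\boxtimes S^{\gamma}$ rather than $S^{\gamma}\boxtimes (S^{\gamma})^{*}$); the paper's argument stays within the one-sided isotypic decomposition it has already set up (Theorem \ref{thm:normalcayley}) plus one cited lemma. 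If you write yours up, make explicit that the adjoint $T^{*}$ is equivariant because both standard inner products are invariant (each action permutes the corresponding standard basis), and that ``image of a module map'' is a priori a quotient of $M^{(n-1,1)}\boxtimes M^{(n-1,1)}$, which by semisimplicity is indeed isomorphic to a submodule, as you assert.
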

This is a special case of Theorem 7 in \cite{jointpaper}. We give a short proof for completeness.
\begin{proof}
Let
\[U=\textrm{Span}\{v_{i \mapsto j}\ : i,j \in [n]\}.\]
For each \(i \in [n]\), \(\{v_{i,j}\ : j \in [n]\}\) is a basis for a copy \(W_{i}\) of the permutation module \(M^{(n-1,1)}\) in \(\mathbb{C}S_{n}\). Since
\[M^{(n-1,1)} \cong S^{(n)}\oplus S^{(n-1,1)},\]
we have the decomposition
\[W_{i} = \textrm{Span}\{\boldsymbol{f}\} \oplus V_{i},\]
where \(V_{i}\) is some copy of \(S^{(n-1,1)}\) in \(\mathbb{C}S_{n}\), so
\[\textrm{Span}\{v_{i \mapsto j}\ : j \in [n]\} = W_{i} \leq U_{(n)} \oplus U_{(n-1,1)}\]
for each \(i \in [n]\), and therefore \(U \leq U_{(n)} \oplus U_{(n-1,1)}\).

It is well known that if $G$ is any finite group, and $T,T'$ are two isomorphic submodules of $\mathbb{C}G$, then there exists $s \in \mathbb{C}G$ such that the right multiplication map $x \mapsto xs$ is an isomorphism from $T$ to $T'$ (see for example \cite{jamesliebeck}). Hence, for any \(i \in [n]\), the sum of all right translates of $W_{i}$ contains \(\textrm{Span}\{\boldsymbol{f}\}\) and all submodules of $\mathbb{C}S_{n}$
isomorphic to $S^{(n-1,1)}$, so $U_{(n)} \oplus U_{(n-1,1)} \leq U$. Hence, $U = U_{(n)} \oplus U_{(n-1,1)}$ as required.
\end{proof}

Hence, for \(n \geq 5\), if equality holds in Theorem \ref{thm:leaderconj}, then the characteristic vectors of \(\mathcal{A}\) and \(\mathcal{B}\) are linear combinations of the characteristic vectors of the \(1\)-cosets. It was proved in \cite{jointpaper} that if the characteristic vector of \(\mathcal{A} \subset S_{n}\) is a linear combination of the characteristic vectors of the 1-cosets, then \(\mathcal{A}\) is a disjoint union of 1-cosets. It follows that for \(n \geq 5\), if equality holds in Theorem \ref{thm:leaderconj}, then \(\mathcal{A}\) and \(\mathcal{B}\) are both disjoint unions of 1-cosets. Since they are cross-intersecting, they must both be equal to the same 1-coset, i.e.
\[\mathcal{A}=\mathcal{B} = \{\sigma \in S_{n}:\ \sigma(i)=j\}\]
for some \(i,j \in [n]\). It is easily checked that the same conclusion holds when \(n=4\), so we have the following characterization of the case of equality in Leader's conjecture:

\begin{theorem}
For \(n \geq 4\), if \(\mathcal{A},\mathcal{B} \subset S_{n}\) are cross-intersecting and satisfy
\[|\mathcal{A}||\mathcal{B}| = ((n-1)!)^{2},\]
then
\[\mathcal{A}=\mathcal{B} = \{\sigma \in S_{n}:\ \sigma(i)=j\}\]
for some \(i,j \in [n]\).
\end{theorem}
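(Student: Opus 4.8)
The plan is to read off the equality clause from Theorem~\ref{thm:crosshoffman}(ii) applied to the derangement graph $\Gamma$, and then translate the resulting spectral condition into a combinatorial one via the representation theory already set up. Since $\mathcal{A},\mathcal{B}$ are cross-intersecting, there are no $\Gamma$-edges between them, so $X=\mathcal{A}$, $Y=\mathcal{B}$ satisfy the hypotheses of Theorem~\ref{thm:crosshoffman}. From the eigenvalue table above, $\Gamma$ is $d_n$-regular with top eigenvalue $d_n$, least eigenvalue $\lambda_{(n-1,1)}=-d_n/(n-1)$, and every remaining eigenvalue of absolute value $O((n-2)!)$; hence $\nu=d_n/(n-1)=|\lambda_N|$, the second-largest eigenvalue $\lambda_2$ is positive of size $O((n-2)!)$, and therefore $|\lambda_2|\neq|\lambda_N|$ (for $n\geq 6$ this is immediate from the crude bounds, for $n=5$ one checks it directly partition-by-partition, and $n=4$ is treated separately). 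Thus $\lambda'=-d_n/(n-1)$, and the hypothesis $\sqrt{|\mathcal{A}||\mathcal{B}|}=(n-1)!=\tfrac{\nu}{d_n+\nu}\,n!$ puts us in the equality case of part (ii).

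I would then conclude from Theorem~\ref{thm:crosshoffman}(ii) that $|\mathcal{A}|=|\mathcal{B}|=(n-1)!$ and that $v_{\mathcal{A}},v_{\mathcal{B}}\in\mathrm{Span}\{\mathbf{f}\}\oplus E(\lambda')$. Next I would identify $E(\lambda')$: for $n\geq 5$, the same eigenvalue check shows $|\lambda_\alpha|<d_n/(n-1)$ for every $\alpha\neq(n),(n-1,1)$, so in particular no $\lambda_\alpha$ other than $\lambda_{(n-1,1)}$ equals $-d_n/(n-1)$, whence $E(\lambda')=U_{(n-1,1)}$ and $\mathrm{Span}\{\mathbf{f}\}\oplus E(\lambda')=U_{(n)}\oplus U_{(n-1,1)}$. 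By the Lemma immediately preceding this theorem, $U_{(n)}\oplus U_{(n-1,1)}=\mathrm{Span}\{v_{i\mapsto j}:i,j\in[n]\}$, so $v_{\mathcal{A}}$ and $v_{\mathcal{B}}$ are linear combinations of characteristic vectors of $1$-cosets.

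To finish I would invoke the structural result of \cite{jointpaper}: a family of permutations whose characteristic vector lies in the span of the $1$-coset characteristic vectors is a disjoint union of $1$-cosets. Since $|\mathcal{A}|=|\mathcal{B}|=(n-1)!$ and each $1$-coset has size exactly $(n-1)!$, each of $\mathcal{A},\mathcal{B}$ is a \emph{single} $1$-coset. It remains to observe that two $1$-cosets $\{\sigma:\sigma(i)=j\}$ and $\{\tau:\tau(k)=l\}$ are cross-intersecting only if $(i,j)=(k,l)$: if $(i,j)\neq(k,l)$ one constructs directly a $\sigma$ in the first and a $\tau$ in the second that disagree at every point (a short case analysis according to whether $i=k$ or not). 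Hence $\mathcal{A}=\mathcal{B}=\{\sigma\in S_n:\sigma(i)=j\}$ for some $i,j\in[n]$.

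The conceptual content is thus entirely in place; the main obstacle is the low-end bookkeeping — confirming $|\lambda_2|\neq|\lambda_N|$ and $E(\lambda')=U_{(n-1,1)}$ for \emph{every} $n\geq 5$ (which requires evaluating $\lambda_\alpha$ for the few partitions with $f^\alpha$ just above $\binom{n-1}{2}-1$ when $n=5$, since the sum-of-squares bound alone is too weak there), and disposing of $n=4$ by an ad hoc check, e.g. a direct verification that among the cross-intersecting pairs of subsets of $S_4$ meeting the bound only the diagonal $1$-coset pairs occur.
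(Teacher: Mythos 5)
Your proposal is correct and follows essentially the same route as the paper: the equality clause of Theorem~\ref{thm:crosshoffman}(ii) applied to the derangement graph, the identification of the relevant eigenspaces with \(U_{(n)}\oplus U_{(n-1,1)}\) (checked directly for small \(n\)), the lemma identifying this with the span of the \(1\)-coset characteristic vectors, the structural result of \cite{jointpaper} forcing \(\mathcal{A},\mathcal{B}\) to be disjoint unions of \(1\)-cosets of size \((n-1)!\), and a separate check for \(n=4\). The only difference is that you spell out why two distinct \(1\)-cosets cannot be cross-intersecting, which the paper leaves implicit.
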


\section{Stability}

We will now perform a stability analysis for intersecting families of permutations. First, we prove a `rough' stability result: for any positive constant \(c > 0\), if \(\mathcal{A} \subset S_n\) is an intersecting family of permutations with \(|\mathcal{A}|\geq c(n-1)!\), then there exist \(i\) and \(j\) such that all but \(O((n-2)!)\) permutations in \(\mathcal{A}\) map \(i\) to \(j\), i.e. \(\mathcal{A}\) is `almost' centred. In other words, writing \(\mathcal{A}_{i \mapsto j}\) for the collection of all permutations in \(\mathcal{A}\) mapping \(i\) to \(j\), we have \(|\mathcal{A} \setminus \mathcal{A}_{i \mapsto j}| \leq O((n-2)!)\). To prove this, our first step will be to show that if \(\mathcal{A} \subset S_n\) is an intersecting family with \(|\mathcal{A}| \geq c(n-1)!\), then the characteristic vector \(v_{\mathcal{A}}\) of \(\mathcal{A}\) cannot be too far from the subspace \(U\) spanned by the characteristic vectors of the 1-cosets, the intersecting families of maximum size. Secondly, we will use this to show that there exist \(i,j \in [n]\) such that \(|\mathcal{A}_{i \mapsto j}| \geq \omega((n-2)!)\). Clearly, for any fixed \(i \in [n]\),
\[\sum_{k=1}^{n}|\mathcal{A}_{i \mapsto k}| = |\mathcal{A}|,\]
and therefore the average size of an \(|\mathcal{A}_{i \mapsto k}|\) is \(|\mathcal{A}| / n\); we have found \(i\) and \(j\) such that \(|\mathcal{A}_{i \mapsto j}|\) is \(\omega\) of the average size. This statement would at first seem too weak to help us, but using the fact that \(\mathcal{A}\) is intersecting, we will `bootstrap' it to the much stronger statement \(|\mathcal{A}_{i \mapsto j}| \geq (1-o(1))|\mathcal{A}|\). In detail, we will deduce from Theorem \ref{thm:leaderconj} that for any \(k \neq j\),
\[|\mathcal{A}_{i \mapsto j}||\mathcal{A}_{i \mapsto k}| \leq ((n-2)!)^{2},\]
giving \(|\mathcal{A}_{i \mapsto k}| \leq o((n-2)!)\) for any \(k \neq j\). Summing over all \(k \neq j\) will give \(|\mathcal{A} \setminus \mathcal{A}_{i \mapsto j}| \leq o((n-1)!)\), enabling us to complete the proof.

Note that this is enough to prove the stability conjecture of Cameron and Ku: if \(\mathcal{A}\) is non-centred, it must contain some permutation \(\tau\) such that \(\tau(i) \neq j\). This immediately forces \(|\mathcal{A}_{i \mapsto j}|\) to be less than \((1-1/e+o(1))(n-1)!\), yielding a contradiction if \(c > 1-1/e\), and \(n\) is sufficiently large depending on \(c\).

Here, then, is our rough stability result:

\begin{theorem}
\label{thm:roughstability}
For any \(c > 0\), there exists \(K >0\) such that the following holds. If \(\mathcal{A}\subset S_{n}\) is an intersecting family of permutations with \(|\mathcal{A}|\geq c(n-1)!\), then there exist \(i,j \in [n]\) such that
\[|\mathcal{A} \setminus \mathcal{A}_{i \mapsto j}| \leq K(n-2)!.\]
\end{theorem}
To carry out the first of the above steps, we will need a `stability' version of Hoffman's theorem:
\begin{lemma}
\label{lemma:stabilityhoffman}
Let \(H = (V,E)\) be a \(d\)-regular graph on \(N\) vertices, and let \(A\) denote the adjacency matrix of \(H\). Let \(\lambda_{N}\) denote the least eigenvalue of \(A\), and let \(U = \textrm{Span}(\mathbf{f}) \oplus \textrm{Ker}(A - \lambda_N I)\). Let
\(\lambda_M = \min\{\lambda_i:\ \lambda_i \neq \lambda_N\}\). Let \(X \subset V(H)\) be an independent set in \(H\), and let \(\alpha = |X|/N\) denote its measure. Equip \(\mathbb{R}^{V}\) with the inner product:
\[\langle u,v \rangle = \frac{1}{N}\sum_{i=1}^{N} u(i) v(i),\]
and let
\[||u|| = \sqrt{\frac{1}{N}\sum_{i=1}^{N}u(i)^{2}}\]
be the induced Euclidean norm. Let \(D\) denote the Euclidean distance from the characteristic vector \(v_{X}\) of \(X\) to the subspace \(U\), i.e. the norm \(||P_{U^{\perp}}(v_{X})||\) of the projection of \(v_{X}\) onto \(U^{\perp}\). Then
\[D^{2} \leq \frac{(1-\alpha)|\lambda_{N}| - \lambda_{1} \alpha}{|\lambda_{N}|-|\lambda_{M}|}\alpha.\]
\end{lemma}
\begin{proof}
This is a straightforward adaptation of the proof of Hoffman's theorem. Let \(u_{1} = \mathbf{f},u_{2},\ldots,u_{N}\) be an orthonormal basis of real eigenvectors of \(A\) corresponding to the eigenvalues \(\lambda_{1}=d,\lambda_{2},\ldots,\lambda_{N}\). Write
\[v_{X} = \sum_{i=1}^{N} \xi_{i}u_{i}\]
as a linear combination of these eigenvectors. We have \(\xi_{1}=\alpha\) and
\[\sum_{i=1}^{N}\xi_{i}^{2} = ||v_{X}||^{2} = \alpha.\]
Since \(X\) is an independent set in \(H\), we have:
\[0=\sum_{x,y \in X}A_{x,y}=v_{X}^{\top} A v_{X} = \sum_{i=1}^{N} \lambda_{i} \xi_{i}^{2} \geq d\xi_{1}^{2} + \lambda_{N} \sum_{i:\lambda_{i}=\lambda_{N}} \xi_{i}^{2} + \lambda_{M} \sum_{i>1:\lambda_{i} \neq \lambda_{N}}\xi_{i}^{2}.\]
Note that
\[\sum_{i>1:\lambda_{i} \neq \lambda_{N}}\xi_{i}^{2} = D^{2}\]
and
\[\sum_{i:\lambda_{i}=\lambda_{N}} \xi_{i}^{2} = \alpha - \alpha^{2} - D^{2},\] 
so we have
\[0 \geq d \alpha^{2} + \lambda_{N} (\alpha -\alpha^{2} -D^{2}) + \lambda_{M} D^{2}.\]
Rearranging, we obtain:
\[D^{2} \leq \frac{(1-\alpha)|\lambda_{N}| - d \alpha}{|\lambda_{N}|-|\lambda_{M}|}\alpha,\]
as required.
\end{proof}
For the second step, we will need an isoperimetric inequality for the {\em transposition graph} on \(S_n\). If \(H = (V,E)\) is a graph, and \(x,y \in V\), we define the {\em graph distance} \(d_{H}(x,y)\) to be the length of the shortest path in \(H\) between \(x\) and \(y\). If \(X \subset V(H)\), and \(h >0\), we define the {\em \(h\)-neighbourhood} \(N_{h}(X)\) to be the set of vertices of \(H\) which are at distance at most \(h\) from \(X\), i.e.
\[N_{h}(X) = \{y \in V:\ d_{H}(x,y) \leq h\ \textrm{for some } x \in X\}.\]

The \textit{transposition graph} \(T\) is the Cayley graph on \(S_{n}\) generated by the transpositions, i.e. \(V(T) = S_{n}\) and \(\sigma \pi \in E(T)\) if and only if \(\sigma^{-1} \pi\) is a transposition. We will use the following isoperimetric inequality for \(T\), essentially the martingale inequality of Maurey:
\begin{theorem} \label{thm:maurey}
Let \(0 < a < 1\), and let \(X \subset V(T)\) with \(|X| \geq a n!\). Then for any \(h \geq h_{0} := \sqrt{\tfrac{1}{2}(n-1)\log \tfrac{1}{a}}\),
\[|N_{h}(X)| \geq \left(1-e^{-\frac{2(h-h_{0})^{2}}{n-1}}\right)n!.\]
\end{theorem}
(For a proof, see for example \cite{McD}.) We will deduce from this that for any two sets \(X,Y \subset S_n\) which are not too small, there exist permutations \(\sigma \in X\) and \(\tau \in Y\) which are `close' to one another in \(T\).

Finally, we need the following simple consequence of Theorem \ref{thm:leaderconj}:
\begin{lemma}
\label{lemma:crosslemma}
Let \(\mathcal{A} \subset S_{n}\) be an intersecting family; then for all \(i, j\) and \(k\) with \(k \neq j\),
\[|\mathcal{A}_{i\mapsto j}||\mathcal{A}_{i \mapsto k}| \leq ((n-2)!)^{2}.\]
\end{lemma}
\begin{proof}
By double translation, we may assume that \(i=j=1\) and \(k=2\). Let \(\sigma \in \mathcal{A}_{1 \mapsto 1}\) and \(\pi \in \mathcal{A}_{1 \mapsto 2}\); then there exists \(p \neq 1\) such that \(\sigma(p) = \pi(p) > 2\). Hence, the translates \(\mathcal{E} = \mathcal{A}_{1 \mapsto 1}\) and \(\mathcal{F} = (1 \ 2)\mathcal{A}_{1 \mapsto 2}\) are families of permutations fixing 1 and cross-intersecting on the domain \(\{2,3,\ldots,n\}\). Deleting 1 from each permutation in the two families gives a cross-intersecting pair \(\mathcal{E}',\mathcal{F}'\) of families of permutations of \(\{2,3,\ldots,n\}\); applying Theorem \ref{thm:leaderconj} gives:
\[|\mathcal{A}_{1 \mapsto 1}||\mathcal{A}_{1 \mapsto 2}| = |\mathcal{E}'||\mathcal{F}'| \leq ((n-2)!)^{2}.\]
\end{proof}

\begin{proof}[Proof of Theorem \ref{thm:roughstability}:]
Let \(c > 0\) be a positive constant, and let \(\mathcal{A}\subset S_{n}\) be an intersecting family of permutations with \(|\mathcal{A}|\geq c(n-1)!\). Write \(\alpha = |\mathcal{A}|/n!\). Since \(\mathcal{A}\) is an independent set in the derangement graph \(\Gamma\), which has \(|\lambda_M| = O((n-2)!)\), Lemma \ref{lemma:stabilityhoffman} yields:
\begin{eqnarray*}
D^{2} & \leq & \frac{(1-\alpha)d_{n}/(n-1) -d_{n}\alpha}{d_{n}/(n-1)-|\lambda_{M}|}\frac{|\mathcal{A}|}{n!}\\
& = & \frac{1-\alpha -\alpha(n-1)}{1-(n-1)|\lambda_{M}|/d_{n}}\frac{|\mathcal{A}|}{n!}\\
& = & \frac{1-\alpha n}{1-O(1/n)}\frac{|\mathcal{A}|}{n!}\\
& = & (1-\alpha n)(1+O(1/n))|\mathcal{A}|/n!,
\end{eqnarray*}
where \(D = ||P_{U^{\perp}}(v_{\mathcal{A}})||\) denotes the Euclidean distance from \(v_{\mathcal{A}}\) to the subspace
\[U = U_{(n)} \oplus U_{(n-1,1)} = \textrm{Span}\{v_{i \mapsto j}\ : i,j \in [n]\}.\]
Write \(|\mathcal{A}| = (1-\delta)(n-1)!\), where \(\delta < 1\). Then
\begin{equation}
\label{eq:perpprojection}
||P_{U^{\perp}}(v_{\mathcal{A}})||^{2}  = D^2 \leq \delta(1 + O(1/n))|\mathcal{A}|/n!.
\end{equation}
We now derive a formula for \(P_{U}(v_{A})\). The projection of \(v_{\mathcal{A}}\) onto \(U_{(n)} = \textrm{Span}\{\mathbf{f}\}\) is clearly \((|\mathcal{A}|/n!)\mathbf{f}\). By (\ref{eq:idempotents}), the primitive central idempotent generating \(U_{(n-1,1)}\) is
\[\frac{n-1}{n!} \sum_{\pi \in S_{n}} \chi_{(n-1,1)}(\pi^{-1}) \pi,\]
and therefore the projection of \(v_{\mathcal{A}}\) onto \(U_{(n-1,1)}\) is given by
\[P_{U_{(n-1,1)}}(v_{\mathcal{A}}) = \frac{n-1}{n!} \sum_{\rho \in \mathcal{A}} \sum_{\pi \in S_{n}} \chi_{(n-1,1)}(\pi^{-1}) \pi \rho,\]
which has \(\sigma\)-coordinate
\begin{eqnarray*}
P_{U_{(n-1,1)}}(v_{\mathcal{A}})_{\sigma} & =& \frac{n-1}{n!}\sum_{\rho \in \mathcal{A}} \chi_{(n-1,1)}(\rho \sigma^{-1})\\
&=& \frac{n-1}{n!}\sum_{\rho \in \mathcal{A}}(\xi_{(n-1,1)}(\rho \sigma^{-1}) - 1)\\
& = & \frac{n-1}{n!} \sum_{\rho \in \mathcal{A}} (\#\{\textrm{fixed points of }\rho \sigma^{-1}\}-1)\\
& = & \frac{n-1}{n!}(\#\{(\rho,i): \rho \in \mathcal{A},\ i \in [n],\ \rho(i)=\sigma(i)\} - |\mathcal{A}|)\\
& = & \frac{n-1}{n!} \sum_{i=1}^{n} |\mathcal{A}_{i \mapsto \sigma(i)}| - \frac{n-1}{n!}|\mathcal{A}|.
\end{eqnarray*}
Hence, the \(\sigma\)-coordinate \(P_{\sigma}\) of the projection of \(v_{\mathcal{A}}\) onto \(U = U_{(n)}\oplus U_{(n-1,1)}\) is given by
\[P_{\sigma} = \frac{n-1}{n!} \sum_{i=1}^{n} |\mathcal{A}_{i \mapsto \sigma(i)}| - \frac{(n-2)}{n!}|\mathcal{A}|,\]
which is a linear function of
\[\sum_{i=1}^{n} |\mathcal{A}_{i \mapsto \sigma(i)}| = \#\{(\rho,i) \in  \mathcal{A} \times [n] :\ \rho(i)=\sigma(i)\},\]
the number of times \(\sigma\) agrees with a permutation in \(\mathcal{A}\).

From (\ref{eq:perpprojection}), we have
\[\sum_{\sigma \in \mathcal{A}} (1-P_{\sigma})^{2}+\sum_{\sigma \notin \mathcal{A}} P_{\sigma}^{2} \leq |\mathcal{A}|\delta (1+O(1/n)).\]
Choose \(C > 0\) such that \(|\mathcal{A}|(1-1/n)\delta(1+C/n)\) is at least the right-hand side; then
\[(1-P_{\sigma})^{2} < \delta(1+C/n)\]
for at least \(|\mathcal{A}|/n\) permutations in \(\mathcal{A}\), so the subset
\[\mathcal{A}' := \{\sigma \in \mathcal{A}: (1-P_{\sigma})^{2} < \delta(1+C/n)\}\]
has
\begin{equation}
\label{eq:boundA'}
|\mathcal{A}'| \geq |\mathcal{A}|/n.
\end{equation}
Similarly, \(P_{\sigma}^{2} < 2\delta/n\) for all but at most
\[n|\mathcal{A}|(1+O(1/n))/2 = (1-\delta)n!(1+O(1/n))/2\]
permutations \(\sigma \notin \mathcal{A}\), so the subset \(\mathcal{R} = \{\sigma \notin \mathcal{A}: P_{\sigma}^{2} < 2\delta/n\}\) has
\begin{equation}\label{eq:boundT}
 |\mathcal{R}| \geq n! - (1-\delta)(n-1)!-(1-\delta)n!(1+O(1/n))/2.
\end{equation}
The permutations \(\sigma \in \mathcal{A}'\) have \(P_{\sigma}\) close to 1; the permutations \(\pi \in \mathcal{R}\) have \(P_{\pi}\) close to 0. Using only the lower bounds (\ref{eq:boundA'}) and (\ref{eq:boundT}) on the sizes of \(\mathcal{A}'\) and \(\mathcal{R}\), we may prove the following:\\
\\
\textit{Claim:} There exist permutations \(\sigma \in \mathcal{A}',\ \pi \in \mathcal{R}\) such that \(\sigma^{-1} \pi\) is a product of at most \(h=h(n)\) transpositions, where \(h = 2\sqrt{2 (n-1) \log n}\).\\
\\
\textit{Proof of Claim:} Apply Theorem \ref{thm:maurey} to the set \(\mathcal{A}'\), with \(a = 1/n^{4}\) and \(h = 2h_{0}\). Since \(|\mathcal{A}'| \geq \frac{c(n-1)!}{n} \geq \frac{n!}{n^{4}}\), we have
\[|N_{h}(\mathcal{A}')| \geq (1-n^{-4})n!,\]
so certainly \(N_{h}(\mathcal{A}') \cap \mathcal{R} \neq \emptyset\), proving the claim.

We now have two permutations \(\sigma \in \mathcal{A}\), \(\pi \notin \mathcal{A}\) which are `close' to one another in \(T\) (differing in only \(O(\sqrt{n \log n})\) transpositions) such that \(P_{\sigma} > 1-\sqrt{\delta(1+C/n)}\) and \(P_{\pi} < \sqrt{2\delta/n}\), and therefore \(P_{\sigma}-P_{\pi} > 1-\sqrt{\delta}-O(1/\sqrt{n})\), i.e. \(\sigma\) agrees many more times than \(\pi\) with permutations in \(\mathcal{A}\):
\[\sum_{i=1}^{n}|\mathcal{A}_{i \mapsto \sigma(i)}| - \sum_{i=1}^{n}|\mathcal{A}_{i \mapsto \pi(i)}| \geq (n-1)!(1-\sqrt{\delta}-O(1/\sqrt{n})).\]
Suppose for this pair we have \(\pi = \sigma \tau_{1} \tau_{2} \ldots \tau_{l}\) for transpositions \(\tau_{1},\ldots, \tau_{l}\), where \(l \leq t\). Let \(I\) be the set of numbers appearing in these transpositions; then \(|I| \leq 2l \leq 2t\), and \(\sigma(i)=\pi(i)\) for each  \(i \notin I\). Hence,
\[\sum_{i\in I}|\mathcal{A}_{i\mapsto \sigma(i)}| - \sum_{i\in I}|\mathcal{A}_{i \mapsto \pi(i)}| \geq (n-1)!(1-\sqrt{\delta}-O(1/\sqrt{n})),\]
so certainly,
\[\sum_{i\in I}|\mathcal{A}_{i\mapsto \sigma(i)}| \geq (n-1)!(1-\sqrt{\delta}-O(1/\sqrt{n})).\]
By averaging,
\begin{eqnarray*}
|\mathcal{A}_{i \mapsto \sigma(i)}|& \geq& \frac{1}{|I|} (n-1)!(1-\sqrt{\delta}-O(1/\sqrt{n}))\\
& \geq & \frac{(n-1)!}{4\sqrt{2 (n-1) \log n}} (1-\sqrt{\delta}-O(1/\sqrt{n}))
\end{eqnarray*}
for some \(i \in I\). Let \(\sigma(i)=j\); then
\[|\mathcal{A}_{i \mapsto j}| \geq \frac{(n-1)!}{4\sqrt{2 (n-1) \log n}} (1-\sqrt{1-c}-O(1/\sqrt{n})) = \omega((n-2)!).\]

It follows from Lemma \ref{lemma:crosslemma} that \(|\mathcal{A}_{i \mapsto k}| \leq o((n-2)!)\) for all \(k \neq j\). Summing over all \(k \neq j\) gives
\[|\mathcal{A} \setminus \mathcal{A}_{i \mapsto j}| = \sum_{k \neq j} |\mathcal{A}_{i \mapsto k}| \leq o((n-1)!),\]
and therefore
\begin{equation}
\label{eq:bound11}
|\mathcal{A}_{i \mapsto j}| = |\mathcal{A}|-|\mathcal{A} \setminus \mathcal{A}_{i\mapsto j}| \geq (c-o(1))(n-1)!.
\end{equation}
Applying Lemma \ref{lemma:crosslemma} again gives
\[|\mathcal{A}_{i \mapsto k}| \leq O((n-3)!)\]
for all \(k \neq j\); summing over all \(k \neq j\) gives
\[|\mathcal{A} \setminus \mathcal{A}_{i \mapsto j}| \leq O((n-2)!),\]
proving Theorem \ref{thm:roughstability}.
\end{proof}
The stability conjecture of Cameron and Ku follows easily:
\begin{corollary}
Let \(c > 1-1/e\); then for \(n\) sufficiently large depending on \(c\), any intersecting family \(\mathcal{A} \subset S_{n}\) with \(|\mathcal{A}| \geq c(n-1)!\) is centred.
\end{corollary}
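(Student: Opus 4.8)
The plan is to derive the corollary directly from Theorem~\ref{thm:roughstability} by a short contradiction argument. Suppose \(\mathcal{A} \subset S_{n}\) is intersecting with \(|\mathcal{A}| \geq c(n-1)!\) where \(c > 1 - 1/e\), and suppose for contradiction that \(\mathcal{A}\) is not centred. By Theorem~\ref{thm:roughstability} (applied with this constant \(c\)), there exist \(i, j \in [n]\) such that all but \(O((n-2)!)\) permutations of \(\mathcal{A}\) map \(i\) to \(j\); in particular \(|\mathcal{A}_{i \mapsto j}| \geq |\mathcal{A}| - O((n-2)!) \geq (c - o(1))(n-1)!\).

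Next I would exploit non-centredness: since \(\mathcal{A}\) is not contained in the \(1\)-coset \(\{\sigma : \sigma(i) = j\}\), there is some \(\tau \in \mathcal{A}\) with \(\tau(i) \neq j\), say \(\tau(i) = k \neq j\). Every permutation of \(\mathcal{A}_{i \mapsto j}\) must agree with \(\tau\) somewhere, and since it disagrees with \(\tau\) at coordinate \(i\), it must agree with \(\tau\) at some coordinate \(p \neq i\). Thus \(\mathcal{A}_{i \mapsto j}\) is contained in the set of permutations fixing \(i \mapsto j\) that meet the fixed permutation \(\tau\) off coordinate \(i\). Translating so that \(i = j = 1\), this is exactly the family of permutations fixing \(1\) that intersect a fixed permutation \(\tau'\) of \(\{2,\dots,n\}\); the maximum size of such a family is at most that of \(\mathcal{C}\) on \(n-1\) points, namely \((1 - 1/e + o(1))(n-1)!\) by the Claim computed in the introduction (the bound \((n-1)! - d_{n-1} - d_{n-2}\), with the convention that the extra element \((1\,2)\) is added there but costs nothing to an upper bound).

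Combining the two estimates gives \((c - o(1))(n-1)! \leq |\mathcal{A}_{i \mapsto j}| \leq (1 - 1/e + o(1))(n-1)!\), i.e. \(c \leq 1 - 1/e + o(1)\), which contradicts \(c > 1 - 1/e\) once \(n\) is large enough depending on \(c\). Hence \(\mathcal{A}\) is centred. The only real content is bounding \(|\mathcal{A}_{i \mapsto j}|\) for a non-centred family; the cleanest way is the Hilton--Milner-type observation above, using the derangement count from the introduction, rather than re-deriving it — so I expect essentially no obstacle here, the corollary being a quick consequence of the rough stability theorem. One should just be slightly careful that the \(O((n-2)!)\) error term from Theorem~\ref{thm:roughstability} is genuinely \(o((n-1)!)\), which it is.
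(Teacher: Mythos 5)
Your proposal is correct and follows essentially the same route as the paper: apply Theorem \ref{thm:roughstability} to get \(|\mathcal{A}_{i \mapsto j}| \geq (c-o(1))(n-1)!\), then use a \(\tau \in \mathcal{A}\) with \(\tau(i)\neq j\) to bound \(|\mathcal{A}_{i\mapsto j}|\) by the number of permutations mapping \(i\) to \(j\) and meeting \(\tau\), which is \((1-1/e+o(1))(n-1)!\) (the paper gets the exact value \((n-1)!-d_{n-1}-d_{n-2}\) by double-translating to \(i=j=1\), \(\tau=(1\ 2)\)). Your counting step is phrased slightly loosely (after translation \(\tau\) is not a permutation of \(\{2,\ldots,n\}\), and the relevant comparison is with the introduction's count for \(\mathcal{C}\) on \(n\) points, not \(n-1\)), but the upper bound \((1-1/e+o(1))(n-1)!\) is valid and the contradiction with \(c>1-1/e\) goes through exactly as in the paper.
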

\begin{proof}
Let \(c > 1-1/e\), and let \(\mathcal{A} \subset S_n\) be intersecting, with \(|\mathcal{A}| \geq c(n-1)!\). By Theorem \ref{thm:roughstability}, there exist \(i,j \in [n]\) such that \(|\mathcal{A}\setminus \mathcal{A}_{i \mapsto j}| \leq O((n-2)!)\), and therefore
\begin{equation}
\label{eq:oneislarge}
|\mathcal{A}_{i \mapsto j}| \geq (c - O(1/n))(n-1)!.
\end{equation}
Suppose for a contradiction that \(\mathcal{A}\) is non-centred. Then there exists a permutation \(\tau \in \mathcal{A}\) such that \(\tau(i) \neq j\). Any permutation in \(\mathcal{A}_{i \mapsto j}\) must agree with \(\tau\) at some point. But for any \(i,j \in [n]\) and any \(\tau \in S_{n}\) such that \(\tau(i) \neq j\), the number of permutations in \(S_{n}\) which map \(i\) to \(j\) and agree with \(\tau\) at some point is
\[(n-1)!-d_{n-1}-d_{n-2} = (1-1/e-o(1))(n-1)!.\]
(By double translation, we may assume that \(i=j=1\) and \(\tau = (1\ 2)\); we observed above that the number of permutations fixing \(1\) and intersecting \((1\ 2)\) is \((n-1)!-d_{n-1}-d_{n-2}\).) This contradicts (\ref{eq:oneislarge}) provided \(n\) is sufficiently large depending on \(c\).
\end{proof}

We now use our rough stability result to prove the Hilton-Milner type conjecture of Cameron and Ku, for \(n\) sufficiently large. First, we introduce an extra notion which will be useful in the proof. Following Cameron and Ku \cite{cameron}, given a permutation \(\pi \in S_{n}\) and \(i \in [n]\), we define the \(i\)-\textit{fix} of \(\pi\) to be the permutation \(\pi_{i}\) which fixes \(i\), maps the preimage of \(i\) to the image of \(i\), and agrees with \(\pi\) at all other points of \([n]\), i.e.
\[\pi_{i}(i) = i;\ \pi_{i}(\pi^{-1}(i)) = \pi(i);\ \pi_{i}(k)=\pi(k)\ \forall k \neq i,\pi^{-1}(i).\]
In other words, \(\pi_{i} = \pi(\pi^{-1}(i)\ i)\). We inductively define
\[\pi_{i_{1},\ldots,i_{l}} = (\pi_{i_{1},\ldots,i_{l-1}})_{i_{l}}.\]
Notice that if \(\sigma\) fixes \(j\), then \(\sigma\) agrees with \(\pi_{j}\) wherever it agrees with \(\pi\).

\begin{theorem}
\label{thm:cameronkuconj}
For \(n\) sufficiently large, if \(\mathcal{A} \subset S_{n}\) is a non-centred intersecting family, then \(\mathcal{A}\) is at most as large as the family
\[\mathcal{C} = \{\sigma \in S_{n}: \sigma(1)=1, \sigma(i)=i \textrm{ for some } i > 2\} \cup \{(12)\},\]
which has size \((n-1)!-d_{n-1}-d_{n-2}+1 = (1-1/e+o(1))(n-1)!\). Equality holds if and only if \(\mathcal{A}\) is a double translate of \(\mathcal{C}\), i.e. \(\mathcal{A} = \pi \mathcal{C} \tau\) for some \(\pi,\tau \in S_{n}\).
\end{theorem}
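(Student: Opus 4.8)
The plan is to use the rough stability theorem to reduce to a family lying almost entirely inside a single $1$-coset, and then to compute $|\mathcal{A}|$ exactly by an inclusion--exclusion argument. First, if $|\mathcal{A}|<(1-1/e-\varepsilon)(n-1)!$ for a small fixed $\varepsilon>0$, then $|\mathcal{A}|<|\mathcal{C}|$ for $n$ large and there is nothing to prove; so I may assume $|\mathcal{A}|\ge(1-1/e-\varepsilon)(n-1)!$. I may also assume $\mathcal{A}$ is a \emph{maximal} non-centred intersecting family: any non-centred intersecting family containing $\mathcal{A}$ is at least as large, and if $|\mathcal{A}|=|\mathcal{C}|$ then $\mathcal{A}$ must already be maximal. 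Applying Theorem~\ref{thm:roughstability} with $c=1-1/e-\varepsilon$ and double-translating, I may assume $i=j=1$, so that $\mathcal{B}:=\mathcal{A}\setminus\mathcal{A}_{1\mapsto1}$ has size $O((n-2)!)$; since $\mathcal{A}$ is non-centred $\mathcal{B}\ne\emptyset$, and since any two permutations fixing $1$ agree at $1$, maximality forces
\[\mathcal{A}_{1\mapsto1}=\{\sigma\in S_n:\sigma(1)=1,\ \sigma\text{ agrees somewhere with every }\pi\in\mathcal{B}\}.\]

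Next I would fix a single $\pi\in\mathcal{B}$ and study its ``bad set'' $N_\pi:=\{\sigma:\sigma(1)=1,\ \sigma\text{ disagrees with }\pi\text{ everywhere}\}$, using the $i$-fix $\pi_1$: since $\sigma(1)=1=\pi_1(1)$ and $\pi(1)\ne1$, one checks that $\sigma\in N_\pi$ iff, on $\{2,\dots,n\}$, $\sigma$ disagrees everywhere with the partial injection $k\mapsto\pi_1(k)$ ($k\ne\pi^{-1}(1)$), which is a matching covering all but one point of $\{2,\dots,n\}$ on each side. A rook-polynomial (inclusion--exclusion) computation then gives $|N_\pi|=d_{n-1}+d_{n-2}=:m$, \emph{independently of $\pi$} (equivalently: the number of permutations of an $N$-set avoiding $N-1$ prescribed non-fixed-point conditions is $d_N+d_{N-1}$). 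Hence $|\mathcal{A}|=(n-1)!-\big|\bigcup_{\pi\in\mathcal{B}}N_\pi\big|+|\mathcal{B}|$, so the theorem reduces to showing
\[\Big|\bigcup_{\pi\in\mathcal{B}}N_\pi\Big|\ \ge\ m+|\mathcal{B}|-1,\]
with equality only if $|\mathcal{B}|=1$.

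To establish this I would first check that $\pi\mapsto N_\pi$ is injective on $\{\pi:\pi(1)\ne1\}$ for $n$ large: one recovers the defining partial matching of $N_\pi$ (hence $\pi$) from $N_\pi$, because for every cell $(k,l)$ not in the matching there is still a permutation in $N_\pi$ carrying $k$ to $l$. Thus the $N_\pi$, $\pi\in\mathcal{B}$, are $|\mathcal{B}|$ \emph{distinct} sets of the common size $m$. The main difficulty is the gap between this (which only gives $\big|\bigcup N_\pi\big|>m$) and the bound required, in which each additional member of $\mathcal{B}$ must contribute a new permutation. Here I expect to use that $\mathcal{B}$ is intersecting: distinct $\pi,\pi'\in\mathcal{B}$ agree somewhere, which constrains how close their associated matchings can be, so a further rook-polynomial estimate of the overlaps $|N_\pi\cap N_{\pi'}|$ — and more generally of $\big|\bigcup_{\pi\in\mathcal{B}'}N_\pi\big|$ for $\mathcal{B}'\subseteq\mathcal{B}$ — combined with the quantitative bound $|\mathcal{B}|=O((n-2)!)$ from Theorem~\ref{thm:roughstability}, yields the inequality. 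This overlap analysis is the technical heart of the argument and the step I expect to be hardest to make clean.

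Finally, in the equality case the above forces $\mathcal{B}=\{\pi\}$ and $\mathcal{A}=\{\pi\}\cup\{\sigma:\sigma(1)=1,\ \sigma\text{ agrees somewhere with }\pi\}$ for some $\pi$ with $\pi(1)\ne1$. Left-multiplying by $\pi^{-1}$ turns this into $\{e\}\cup\{\tau:\tau(1)=\pi^{-1}(1),\ \tau\text{ has a fixed point}\}$; conjugating by any permutation fixing $1$ and sending $\pi^{-1}(1)$ to $2$ turns it into $\{e\}\cup\{\tau:\tau(1)=2,\ \tau\text{ has a fixed point}\}=(1\,2)\mathcal{C}$. Hence $\mathcal{A}$ is a double translate of $\mathcal{C}$, and conversely every double translate of $\mathcal{C}$ is a non-centred intersecting family of the same size, which completes the proof.
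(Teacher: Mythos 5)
Your reduction is sound as far as it goes: passing to a maximal non-centred intersecting family, applying Theorem~\ref{thm:roughstability} and translating so that \(i=j=1\), writing \(\mathcal{B}=\mathcal{A}\setminus\mathcal{A}_{1\mapsto 1}\) and noting that maximality forces \(\mathcal{A}_{1\mapsto 1}\) to consist of all permutations fixing \(1\) that meet every member of \(\mathcal{B}\), the computation \(|N_\pi|=d_{n-1}+d_{n-2}=m\) for every \(\pi\) with \(\pi(1)\neq 1\), and the endgame identifying the equality case as a double translate of \(\mathcal{C}\) are all correct, and they correctly reformulate the theorem as the inequality \(\bigl|\bigcup_{\pi\in\mathcal{B}}N_\pi\bigr|\geq m+|\mathcal{B}|-1\), strict when \(|\mathcal{B}|\geq 2\). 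But that inequality \emph{is} the theorem: it is exactly where the work lies, and you have not proved it --- you explicitly defer the ``overlap analysis''. Moreover, the route you sketch (pairwise rook-polynomial estimates of \(|N_\pi\cap N_{\pi'}|\) combined with \(|\mathcal{B}|=O((n-2)!)\)) does not suffice as stated: if \(\pi'=\pi\tau\) for a transposition \(\tau\), then \(|N_{\pi'}\setminus N_{\pi}|\) is only of order \(2d_{n-2}\approx 0.74(n-2)!\), while the \(O((n-2)!)\) bound on \(|\mathcal{B}|\) has an unspecified constant which, tracing the proof of Theorem~\ref{thm:roughstability} with \(c\) near \(1-1/e\), is about \(1/(1-1/e)\approx 1.6\). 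So the increment supplied by a best pair can be smaller than \(|\mathcal{B}|-1\), and pairwise information alone cannot certify that each additional member of \(\mathcal{B}\) contributes a new permutation to the union.

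What is missing is the paper's two-step mechanism for handling this. First, if \(\mathcal{B}\) contains two permutations, the \(i\)-fix trick (permutations fixing \(1\) and a point \(p\) shifted by \(\pi\), and disagreeing with \(\pi_{1,p}\) on \(\{2,\dots,n\}\setminus\{p\}\)) shows the union of just two bad sets exceeds \(m\) by at least \(d_{n-2}\); given \(|\mathcal{A}|\geq|\mathcal{C}|\) this forces \(|\mathcal{B}|\geq d_{n-2}+1=\Omega((n-2)!)\). Second, a family of that size cannot be \((\log n)\)-intersecting (trivial bound \(n!/t!\)), so \(\mathcal{B}\) contains \(\rho,\tau\) agreeing in at most \(\log n\) points; every \(\sigma\in\mathcal{A}_{1\mapsto1}\) must then agree with \(\rho_{1}\) and \(\tau_{1}\) either at one of those few points or at two separate points, giving \(|\mathcal{A}_{1\mapsto1}|\leq((1-1/e)^2+o(1))(n-1)!\) and contradicting \(|\mathcal{A}_{1\mapsto1}|\geq(1-1/e-o(1))(n-1)!\). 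In your language, this is precisely the proof that once \(|\mathcal{B}|\geq 2\) the union of the \(N_\pi\)'s exceeds \(m+|\mathcal{B}|\) by a large margin; some such non-pairwise input (a large \(\mathcal{B}\) cannot be highly intersecting) seems unavoidable, and without it your proposal stops short of the theorem.
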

\begin{proof}
Let \(\mathcal{A} \subset S_n\) be a non-centred intersecting family with the same size as \(\mathcal{C}\); we must show that \(\mathcal{A}\) is a double translate of \(\mathcal{C}\). By Theorem \ref{thm:roughstability}, there exist \(i,j \in [n]\) such that \(|\mathcal{A}\setminus \mathcal{A}_{i \mapsto j}| \leq O((n-2)!)\), and therefore
\[|\mathcal{A}_{i \mapsto j}| \geq (n-1)!-d_{n-1}-d_{n-2}+1-O(n-2)! = (1-1/e-o(1))(n-1)!.\]
Since \(\mathcal{A}\) is non-centred, it must contain some permutation \(\rho\) such that \(\rho(i) \neq j\). By double translation, we may assume that \(i=j=1\) and \(\rho = (1\ 2)\); we will show that under these hypotheses, \(\mathcal{A} = \mathcal{C}\). We have
\begin{equation}
\label{eq:A11islarge}
|\mathcal{A}_{1 \mapsto 1}| \geq (1-1/e-o(1))(n-1)!
\end{equation}
and \((1 \ 2) \in \mathcal{A}\). Note that every permutation in \(\mathcal{A}\) must intersect \((1\ 2)\), and therefore
\[\mathcal{A}_{1 \mapsto 1} \cup \{(1 \ 2)\} \subset \mathcal{C}.\]
We need to show that \((1\ 2)\) is the only permutation in \(\mathcal{A}\) that does not fix 1. Suppose for a contradiction that \(\mathcal{A}\) contains some other permutation \(\pi\) not fixing 1. Then \(\pi\) must shift some point \(p > 2\). If \(\sigma\) fixes both 1 and \(p\), then \(\sigma\) agrees with \(\pi_{1,p} = (\pi_{1})_{p}\) wherever it agrees with \(\pi\). There are exactly \(d_{n-2}\) permutations which fix 1 and \(p\) and disagree with \(\pi_{1,p}\) at every point of \(\{2,\ldots,n\}\setminus \{p\}\); each disagrees everywhere with \(\pi\), so none are in \(\mathcal{A}\), and therefore
\[|\mathcal{A}_{1 \mapsto 1}| \leq (n-1)!-d_{n-1}-2d_{n-2}.\]
Hence, by assumption,
\[|\mathcal{A} \setminus \mathcal{A}_{1 \mapsto 1}| \geq d_{n-2}+1 = \Omega((n-2)!).\]

Notice that we have the following trivial bound on the size of a \(t\)-intersecting family\footnote{We say that a family \(\mathcal{F} \subset S_n\) is {\em \(t\)-intersecting} if any two permutations in \(\mathcal{F}\) agree on at least \(t\) points.} \(\mathcal{F} \subset S_{n}\):
\[|\mathcal{F}| \leq {n \choose t}(n-t)! = n!/t!\]
since every permutation in \(\mathcal{F}\) must agree with a fixed \(\rho \in \mathcal{F}\) in at least \(t\) places.

Hence, \(\mathcal{A}\setminus \mathcal{A}_{1\mapsto 1}\) cannot be \((\log n)\)-intersecting and therefore contains two permutations \(\rho,\tau\) agreeing on at most \(\log n\) points. The number of permutations fixing \(1\) and agreeing with both \(\tau_{1}\) and \(\tau_{2}\) at one of these points is at most \((\log n)(n-2)!\). All other permutations in \(\mathcal{A} \cap \mathcal{C}\) agree with \(\rho\) and \(\tau\) at two separate points of \(\{2,\ldots,n\}\), and by the above argument, the same holds for the 1-fixes \(\rho_{1}\) and \(\tau_{1}\). The number of permutations fixing 1 that agree with \(\rho_{1}\) and \(\tau_{1}\) at two separate points of \(\{2,\ldots,n\}\) is at most \(((1-1/e)^{2} + o(1))(n-1)!\) (it is easily checked that given two fixed permutations, the probability that a uniform random permutation agrees with them at separate points is at most \((1-1/e)^{2} + o(1)\)). Hence,
\begin{eqnarray*}
|\mathcal{A}_{1 \mapsto 1}| & \leq & ((1-1/e)^{2} + o(1))(n-1)! + (\log n) (n-2)!\\
& = & ((1-1/e)^{2}+o(1))(n-1)!,
\end{eqnarray*}
contradicting (\ref{eq:A11islarge}) provided \(n\) is sufficiently large.

Hence, \((1 \ 2)\) is the only permutation in \(\mathcal{A}\) that does not fix 1, so \(\mathcal{A} = \mathcal{A}_{1 \mapsto 1} \cup \{(1\ 2)\} \subset \mathcal{C}\); since \(|\mathcal{A}|=|\mathcal{C}|\), we have \(\mathcal{A}=\mathcal{C}\) as required.
\end{proof}
We now perform a very similar stability analysis for cross-intersecting families. First, we prove a `rough' stability result analogous to Theorem \ref{thm:roughstability}, namely that for any positive constant \(c > 0\), if \(\mathcal{A},\mathcal{B} \subset S_{n}\) are cross-intersecting with \(\sqrt{|\mathcal{A}||\mathcal{B}|} \geq c(n-1)!\), then there exist \(i,j \in [n]\) such that all but at most \(O((n-2)!)\) permutations in \(\mathcal{A}\) and all but at most \(O((n-2)!)\) permutations in \(\mathcal{B}\) map \(i\) to \(j\).

\begin{theorem}
\label{thm:roughstabilitycross}
Let \(c > 0\) be a positive constant. If \(\mathcal{A},\mathcal{B} \subset S_{n}\) are cross-intersecting with \(\sqrt{|\mathcal{A}||\mathcal{B}|} \geq c(n-1)!\), then there exist \(i,j \in [n]\) such that all but at most \(O((n-2)!)\) permutations in \(\mathcal{A}\) and all but at most \(O((n-2)!)\) permutations in \(\mathcal{B}\) map \(i\) to \(j\).
\end{theorem}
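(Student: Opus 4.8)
The plan is to run the argument of Theorem~\ref{thm:roughstability} for each of the two families in turn, with three changes: Hoffman's theorem is replaced by the cross-intersecting bound obtained in the proof of Theorem~\ref{thm:crosshoffman}; Lemma~\ref{lemma:crosslemma} is replaced by its two-family analogue; and a preliminary step is needed to confine both families to the regime $|\mathcal{A}|,|\mathcal{B}|=\Theta((n-1)!)$ in which the argument works. Write $\alpha=|\mathcal{A}|/n!$ and $\beta=|\mathcal{B}|/n!$, let $U=U_{(n)}\oplus U_{(n-1,1)}$ and let $U^{\perp}$ be its orthogonal complement in $\mathbb{C}S_n$ (equipped with the inner product of Theorem~\ref{thm:crosshoffman}). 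Since $\mathcal{A},\mathcal{B}$ are cross-intersecting there is no edge of the derangement graph $\Gamma$ between them, so $0=v_{\mathcal{B}}^{\top}A v_{\mathcal{A}}= d_n\alpha\beta+\lambda_{(n-1,1)}\langle P_{U_{(n-1,1)}}v_{\mathcal{A}},\,P_{U_{(n-1,1)}}v_{\mathcal{B}}\rangle+\langle A P_{U^{\perp}}v_{\mathcal{A}},\,P_{U^{\perp}}v_{\mathcal{B}}\rangle$. Using $\lambda_{(n)}=d_n$, $\lambda_{(n-1,1)}=-d_n/(n-1)$, the fact that every other eigenvalue of $\Gamma$ has modulus $O((n-2)!)$, Cauchy--Schwarz on the last two terms, and the AM--GM symmetrisation used to prove Theorem~\ref{thm:crosshoffman}, one obtains $|\mathcal{A}|,|\mathcal{B}|\le(1-c^2+o(1))n!$ together with a bound of the form $\|P_{U^{\perp}}v_{\mathcal{A}}\|^2\le(1-c^2+o(1))\,|\mathcal{A}|/n!$, and symmetrically for $\mathcal{B}$; the hypothesis $\sqrt{|\mathcal{A}||\mathcal{B}|}\ge c(n-1)!$ is precisely what forces the $U^{\perp}$-component of each characteristic vector to be a fraction bounded away from~$1$.

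Exactly as in Theorem~\ref{thm:roughstability}, the $\sigma$-coordinate of $P_Uv_{\mathcal{A}}$ is the affine function $P_\sigma=\tfrac{n-1}{n!}\sum_i|\mathcal{A}_{i\mapsto\sigma(i)}|-\tfrac{n-2}{n!}|\mathcal{A}|$ of the number of agreements of $\sigma$ with members of $\mathcal{A}$, computed via the primitive central idempotent~(\ref{eq:idempotents}) of $U_{(n-1,1)}$. The bound on $\|P_{U^{\perp}}v_{\mathcal{A}}\|^2$ says that $\sum_{\sigma\in\mathcal{A}}(1-P_\sigma)^2+\sum_{\sigma\notin\mathcal{A}}P_\sigma^2$ is a fraction of $|\mathcal{A}|$ bounded away from~$1$, so for suitable constants $\delta>\delta'>0$ depending only on $c$ the set $\mathcal{A}'=\{\sigma\in\mathcal{A}:P_\sigma>\delta\}$ has size at least a positive constant times $|\mathcal{A}|$, while $\mathcal{T}=\{\sigma\notin\mathcal{A}:P_\sigma<\delta'\}$ has size $\Omega(n!)$ (here one uses that $|\mathcal{A}|$ is bounded away from $n!$). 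We may assume $|\mathcal{A}|\ge c(n-1)!$, since $\max(|\mathcal{A}|,|\mathcal{B}|)\ge c(n-1)!$; then $|\mathcal{A}'|\ge n!/n^{O(1)}$, so applying the isoperimetric inequality of Theorem~\ref{thm:maurey} to $\mathcal{A}'$ with $h=\Theta(\sqrt{n\log n})$ yields $\sigma\in\mathcal{A}'$ and $\pi\in\mathcal{T}$ with $\sigma^{-1}\pi$ a product of at most $h$ transpositions. Then $P_\sigma-P_\pi=\Omega(1)$, while $\sigma$ and $\pi$ agree off a set $I$ of $O(\sqrt{n\log n})$ coordinates, so $\sum_{i\in I}|\mathcal{A}_{i\mapsto\sigma(i)}|=\Omega((n-1)!)$, and by averaging $|\mathcal{A}_{i\mapsto j}|=\omega((n-2)!)$ for some $i$ and $j=\sigma(i)$.

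The two-family analogue of Lemma~\ref{lemma:crosslemma} is: if $\mathcal{A},\mathcal{B}\subset S_n$ are cross-intersecting then $|\mathcal{A}_{i\mapsto j}|\,|\mathcal{B}_{i\mapsto k}|\le((n-2)!)^2$ for all $i$ and all $k\ne j$; the proof of Lemma~\ref{lemma:crosslemma} applies verbatim, since for $\sigma\in\mathcal{A}_{i\mapsto j}$ and $\pi\in\mathcal{B}_{i\mapsto k}$ the agreement cannot occur at $i$, so after a double translation $\mathcal{A}_{i\mapsto j}$ and $\mathcal{B}_{i\mapsto k}$ become cross-intersecting families of permutations of $[n]\setminus\{i\}$, to which Theorem~\ref{thm:leaderconj} applies. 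From $|\mathcal{A}_{i\mapsto j}|=\omega((n-2)!)$ this gives $|\mathcal{B}_{i\mapsto k}|=o((n-2)!)$ for every $k\ne j$, hence $|\mathcal{B}\setminus\mathcal{B}_{i\mapsto j}|=o((n-1)!)$, and in particular $|\mathcal{B}|\le(1+o(1))(n-1)!$. Given the preliminary step below, which also yields $|\mathcal{B}|=\Omega((n-1)!)$, we get $|\mathcal{B}_{i\mapsto j}|=\omega((n-2)!)$, so the same lemma with the roles of $\mathcal{A}$ and $\mathcal{B}$ reversed gives $|\mathcal{A}_{i\mapsto k}|=o((n-2)!)$ for $k\ne j$ and hence $|\mathcal{A}\setminus\mathcal{A}_{i\mapsto j}|=o((n-1)!)$; a final pass, now with the lower bounds $|\mathcal{A}_{i\mapsto j}|,|\mathcal{B}_{i\mapsto j}|=\Omega((n-1)!)$ available in the lemma, upgrades both error terms from $o((n-1)!)$ to $O((n-2)!)$, which is the assertion.

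The main obstacle is the preliminary step. The hypothesis only gives $\max(|\mathcal{A}|,|\mathcal{B}|)\ge c(n-1)!$ directly, and one must rule out the degenerate possibility that one family is much larger than $(n-1)!$ while the other is correspondingly smaller --- in that regime the soft cross-Hoffman bound degrades and the argument above breaks down, so it is essential that such configurations cannot occur. The route I would take: from $|\mathcal{A}|\le(1-c^2+o(1))n!$ and $\sqrt{|\mathcal{A}||\mathcal{B}|}\ge c(n-1)!$ one has $|\mathcal{B}|=\Omega((n-2)!)$, hence $\mathcal{B}$ fails to be $t$-intersecting for $t$ up to roughly $\log n/\log\log n$ (by the trivial bound $n!/t!$ on $t$-intersecting families); iterating this, a pigeonholing argument extracts $\Theta(\log n)$ permutations of $\mathcal{B}$ that pairwise agree in very few places, and a probabilistic estimate then shows that the set of permutations cross-intersecting all of them --- which contains $\mathcal{A}$ --- has size $n!\,(1-1/e+o(1))^{\Theta(\log n)}=O((n-1)!)$. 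Together with Leader's Theorem~\ref{thm:leaderconj} (and the same argument with $\mathcal{A}$ and $\mathcal{B}$ interchanged) this pins both $|\mathcal{A}|$ and $|\mathcal{B}|$ to $\Theta((n-1)!)$, after which everything else is a routine transcription of the proof of Theorem~\ref{thm:roughstability}.
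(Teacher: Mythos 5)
Your overall machinery is the paper's: the softened cross-Hoffman computation giving $\|P_{U^{\perp}}v_{\mathcal{A}}\|^{2}\le(1-c^{2}+o(1))\,|\mathcal{A}|/n!$ (and symmetrically for $\mathcal{B}$), the coordinate formula for $P_{U}v_{\mathcal{A}}$, the Maurey isoperimetric step, the two-family version of Lemma \ref{lemma:crosslemma}, and the back-and-forth bootstrap are all exactly what the paper does. The gap is the step you yourself flag as the main obstacle. Because you run the projection/isoperimetry argument on the \emph{larger} family (WLOG $|\mathcal{A}|\ge c(n-1)!$), you need to know beforehand that $|\mathcal{A}|$ is $o(n!)$: with only $|\mathcal{A}|\le(1-c^{2}+o(1))n!$, the variance bound controls the number of $\sigma\notin\mathcal{A}$ with $P_{\sigma}\ge\delta'$ only by roughly $(1-c^{2})^{2}n!/\delta'^{2}$, which for small $c$ exceeds $n!-|\mathcal{A}|$ for every admissible constant $\delta'<1$ (one would need $\delta'>(1-c^{2})/c$), so the claim $|\mathcal{T}|=\Omega(n!)$ does not follow. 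Your proposed preliminary repair is only a sketch: the assertion that the set of permutations intersecting each of $\Theta(\log n)$ members of $\mathcal{B}$ which pairwise agree in $O(\log n)$ points has size $n!\,(1-1/e+o(1))^{\Theta(\log n)}$ is stated without proof, and it is not routine --- the paper only ever uses (and only calls ``easily checked'') the case of two permutations, while for $k\sim\log n$ one must control both the bias created by conditioning on intersecting the earlier permutations and the accumulation of the $o(1)$ errors over $\log n$ factors. As written, this is a genuine missing ingredient on which your whole argument rests.

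The paper shows the detour is unnecessary: apply the argument to the \emph{smaller} family. Taking $|\mathcal{A}|\le|\mathcal{B}|$, Theorem \ref{thm:leaderconj} gives $|\mathcal{A}|\le\sqrt{|\mathcal{A}||\mathcal{B}|}\le(n-1)!=o(n!)$, so with the same thresholds as in Theorem \ref{thm:roughstability} the set $\mathcal{T}$ automatically has size $(1/2-o(1))n!$; on the other side $|\mathcal{A}|\ge c^{2}((n-1)!)^{2}/n!=c^{2}(n-1)!/n$, which still gives $|\mathcal{A}'|\ge|\mathcal{A}|/n\ge n!/n^{4}$, enough for Theorem \ref{thm:maurey}. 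The key point is that the relative bound $D^{2}\le\alpha(2\delta-\delta^{2}+O(1/n))$, with $\sqrt{|\mathcal{A}||\mathcal{B}|}=(1-\delta)(n-1)!$ and $\delta\le1-c$, does not degrade in the lopsided regime, because each factor in $\sqrt{(1-\alpha-D^{2}/\alpha)(1-\beta-E^{2}/\beta)}\ge1-\delta-O(1/n)$ is at most $1$. One then gets $|\mathcal{A}_{i\mapsto j}|=\omega((n-2)!)$ for the smaller family, and your own bootstrap (which is the paper's) transfers this to $\mathcal{B}$: $|\mathcal{B}\setminus\mathcal{B}_{i\mapsto j}|=o((n-1)!)$, hence a posteriori $|\mathcal{B}|\le(1+o(1))(n-1)!$, and then two more applications of the lemma upgrade both error terms to $O((n-2)!)$. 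In other words, the exclusion of lopsided configurations is an output of the argument, not a required input, and no extraction of $\log n$ nearly-disjoint permutations is needed.
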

\begin{proof}
Let \(|\mathcal{A}| \leq |\mathcal{B}|\). First, we adapt the proof of Theorem \ref{thm:crosshoffman} to obtain information about the distances \(D:=||P_{U^{\perp}}(v_{X})||\) and \(E:= ||P_{U^{\perp}}(v_{Y})||\). This time, we have
\[\sum_{i>1:\lambda_{i} \neq \lambda_{N}} \xi_{i}^{2}= D^{2};\]
\[\sum_{i>1:\lambda_{i} \neq \lambda_{N}} \eta_{i}^{2}=E^{2};\]
\[\sum_{i>1:\lambda_{i} = \lambda_{N}}\xi_{i}^{2} = \alpha-\alpha^{2}-D^{2};\]
\[\sum_{i>1:\lambda_{i} = \lambda_{N}}\eta_{i}^{2} = \beta-\beta^{2}-E^{2}.\]
Substituting into (\ref{eq:sum}) gives:
\begin{eqnarray*}
d \alpha \beta & = & -\sum_{i>1:\lambda_{i} \neq \lambda_{N}} \lambda_{i}\xi_{i}\eta_{i}-\lambda_{N}\sum_{i>1:\lambda_{i}=\lambda_{N}} \xi_{i}\eta_{i}\\
& \leq & \mu \sum_{i>1:\lambda_{i} \neq \lambda_{N}} |\xi_{i}||\eta_{i}|+|\lambda_{N}|\sum_{i>1:\lambda_{i}=\lambda_{N}} |\xi_{i}||\eta_{i}|\\
& \leq & \mu \sqrt{\sum_{i>1:\lambda_{i}\neq \lambda_{N}} \xi_{i}^{2}}\sqrt{\sum_{i>1:\lambda_{i} \neq \lambda_{N}} \eta_{i}^{2}} + |\lambda_{N}| \sqrt{\sum_{i>1:\lambda_{i}=\lambda_{N}} \xi_{i}^{2}}\sqrt{\sum_{i>1:\lambda_{i}=\lambda_{N}} \eta_{i}^{2}}\\
& = & \mu DE + |\lambda_{N}| \sqrt{\alpha-\alpha^{2}-D^{2}}\sqrt{\beta-\beta^{2}-E^{2}},
\end{eqnarray*}
where \(\mu = \max_{i>1: \lambda_{i} \neq \lambda_{N}} |\lambda_{i}|\). Note that the derangement graph \(\Gamma\) has \(\mu \leq O((n-2)!)\). Hence, applying the above result to a cross-intersecting pair \(\mathcal{A},\mathcal{B} \subset S_{n}\) with \(\sqrt{|\mathcal{A}||\mathcal{B}|} = (1-\delta)(n-1)!\), we obtain
\[\sqrt{1-\alpha-D^{2}/\alpha}\sqrt{1-\beta-E^{2}/\beta} \geq \frac{d_{n} \sqrt{\alpha \beta} - \mu (D/\sqrt{\alpha})(E/\sqrt{\beta})}{|\lambda_{N}|} \geq 1-\delta-O(1/n),\]
and therefore \(1-\alpha-D^{2}/\alpha \geq (1-\delta)^{2} - O(1/n)\), so \(D^{2} \leq \alpha(2\delta - \delta^{2} + O(1/n))\). Replacing \(\delta\) with \(2\delta - \delta^{2} + O(1/n)\) in the proof of Theorem \ref{thm:roughstability}, we see that there exist \(i,j \in [n]\) such that
\[|\mathcal{A}_{i \mapsto j}| \geq \frac{(n-1)!}{4\sqrt{2 (n-1) \log n}} (1-\sqrt{2\delta-\delta^{2}}-O(1/\sqrt{n})) = \omega((n-2)!),\]
since \(\delta < 1-c\). For each \(k \neq j\), the pair \(\mathcal{A}_{i \mapsto j},\mathcal{B}_{i \mapsto k}\) is cross-intersecting, so as in Lemma \ref{lemma:crosslemma}, we have:
\[|\mathcal{A}_{i \mapsto j}||\mathcal{B}_{i \mapsto k}| \leq ((n-2)!)^{2}.\]
Hence, for all \(k \neq j\),
\[|\mathcal{B}_{i \mapsto k}| \leq o((n-2)!),\]
so summing over all \(j \neq k\) gives
\[|\mathcal{B} \setminus \mathcal{B}_{i \mapsto j}| \leq o((n-1)!).\]
Since \(|\mathcal{B}| \geq |\mathcal{A}|\), \(|\mathcal{B}| \geq c(n-1)!\), and therefore
\[|\mathcal{B}_{i \mapsto j}| \geq (c-o(1))(n-1)!.\]
For each \(k \neq j\), the pair
\(\mathcal{A}_{i \mapsto k},\mathcal{B}_{i \mapsto j}\) is cross-intersecting, so as before, we have:
\[|\mathcal{A}_{i \mapsto k}||\mathcal{B}_{i \mapsto j}| \leq ((n-2)!)^{2}.\]
Hence, for all \(k \neq j\),
\[|\mathcal{A}_{i \mapsto k}| \leq O((n-3)!),\]
so summing over all \(j \neq k\) gives
\[|\mathcal{A} \setminus \mathcal{A}_{i \mapsto j}| \leq O((n-2)!).\]
Also, \(|\mathcal{B}| = |\mathcal{B}_{i \mapsto j}| + |\mathcal{B} \setminus \mathcal{B}_{i \mapsto j}| \leq (1+o(1))(n-1)!\), so \(|\mathcal{A}| \geq c^{2}(1-o(1))(n-1)!\). Hence,
\[|\mathcal{A}_{i \mapsto j}| \geq c^{2}(1-o(1))(n-1)!,\]
so by the same argument as above,
\[|\mathcal{B}_{i \mapsto k}| \leq O((n-3)!)\]
for all \(k \neq j\), and therefore
\[|\mathcal{B} \setminus \mathcal{B}_{i \mapsto j}| \leq O((n-2)!)\]
as well, proving Theorem \ref{thm:roughstabilitycross}.
\end{proof}
We may use Theorem \ref{thm:roughstabilitycross} to deduce two Hilton-Milner type results on cross-intersecting families:
\begin{theorem}
\label{thm:stabmincross}
For \(n\) sufficiently large, if \(\mathcal{A},\mathcal{B} \subset S_{n}\) are cross-intersecting but not both contained within the same 1-coset, then
\[\min(|\mathcal{A}|,|\mathcal{B}|) \leq |\mathcal{C}| = (n-1)!-d_{n-1}-d_{n-2}+1,\]
with equality if and only if
\begin{eqnarray*}
\mathcal{A}&=&\{\sigma \in S_{n}: \sigma(i)=j,\ \sigma \textrm{ intersects } \tau\}\cup \{\rho\},\\
\mathcal{B}& =& \{\sigma \in S_{n}: \sigma(i)=j, \sigma \textrm{ intersects } \rho\}\cup \{\tau\}
\end{eqnarray*}
for some \(i,j \in [n]\) and some \(\tau,\rho \in S_{n}\) which intersect and do not map \(i\) to \(j\).
\end{theorem}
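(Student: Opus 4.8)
The plan is to run the stability template of the proof of Theorem~\ref{thm:cameronkuconj} through both families simultaneously. Assume without loss of generality that $|\mathcal{A}|\le|\mathcal{B}|$, so $\min(|\mathcal{A}|,|\mathcal{B}|)=|\mathcal{A}|$, and suppose $|\mathcal{A}|\ge|\mathcal{C}|$ (otherwise there is nothing to prove); the goal is then to force $|\mathcal{A}|=|\mathcal{C}|$ and the stated structure. Since $\sqrt{|\mathcal{A}||\mathcal{B}|}\ge|\mathcal{A}|\ge|\mathcal{C}|\ge\tfrac12(n-1)!$ for $n$ large, Theorem~\ref{thm:roughstabilitycross} applies, and after a double translation we may take $i=j=1$ with $|\mathcal{A}\setminus\mathcal{A}_{1\mapsto 1}|,|\mathcal{B}\setminus\mathcal{B}_{1\mapsto 1}|\le O((n-2)!)$, hence $|\mathcal{A}_{1\mapsto 1}|,|\mathcal{B}_{1\mapsto 1}|\ge|\mathcal{C}|-O((n-2)!)=(1-1/e-o(1))(n-1)!$. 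First I would rule out either family lying in a $1$-coset: if $\mathcal{A}$ does, that coset must be $\{\sigma:\sigma(1)=1\}$ (since $\mathcal{A}_{1\mapsto 1}$ is too big to sit inside $\{\sigma:\sigma(1)=1,\ \sigma(k)=l\}$ for $(k,l)\neq(1,1)$), and then deleting the point $1$ makes $\mathcal{A}$ and $\mathcal{B}_{1\mapsto 1}$ a cross-intersecting pair in $S_{n-1}$, so Theorem~\ref{thm:leaderconj} gives $|\mathcal{A}||\mathcal{B}_{1\mapsto 1}|\le((n-2)!)^2$, whence $|\mathcal{A}|=O((n-3)!)<|\mathcal{C}|$, a contradiction; the case $\mathcal{B}\subseteq\{\sigma:\sigma(1)=1\}$ is symmetric. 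So there are $\rho\in\mathcal{A}$, $\tau\in\mathcal{B}$ with $\rho(1)\neq1$, $\tau(1)\neq1$, and $\rho,\tau$ intersecting.

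The second step is a counting fact: for any $\pi\in S_n$ with $\pi(1)\neq1$, the number of $\sigma$ with $\sigma(1)=1$ disagreeing with $\pi$ everywhere is exactly $d_{n-1}+d_{n-2}$, independently of $\pi$ — such $\sigma$ are the permutations of $\{2,\dots,n\}$ avoiding the partial injection $\pi|_{\{2,\dots,n\}\setminus\{\pi^{-1}(1)\}}$, and inclusion--exclusion gives $\sum_{k=0}^{n-2}(-1)^k\binom{n-2}{k}(n-1-k)!=d_{n-1}+d_{n-2}$ irrespective of which partial injection it is. Hence $|\{\sigma:\sigma(1)=1,\ \sigma\cap\pi\neq\emptyset\}|=(n-1)!-d_{n-1}-d_{n-2}=|\mathcal{C}|-1$ for every such $\pi$. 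Since every $\sigma\in\mathcal{A}_{1\mapsto 1}$ must intersect $\tau$, we get $\mathcal{A}_{1\mapsto 1}\subseteq\{\sigma:\sigma(1)=1,\ \sigma\cap\tau\neq\emptyset\}$, so $|\mathcal{A}_{1\mapsto 1}|\le|\mathcal{C}|-1$; symmetrically $|\mathcal{B}_{1\mapsto 1}|\le|\mathcal{C}|-1$. Thus everything reduces to showing that $\mathcal{A}$ — and then, by the same token, $\mathcal{B}$ — has exactly one permutation not fixing $1$: for then $\mathcal{A}=\mathcal{A}_{1\mapsto 1}\cup\{\rho\}$ gives $|\mathcal{A}|\le|\mathcal{C}|$, and $|\mathcal{A}|=|\mathcal{C}|$ forces $\mathcal{A}_{1\mapsto 1}$ to be \emph{all} of $\{\sigma:\sigma(1)=1,\ \sigma\cap\tau\neq\emptyset\}$; combined with $|\mathcal{C}|\le|\mathcal{A}|\le|\mathcal{B}|\le|\mathcal{B}_{1\mapsto 1}|+1\le|\mathcal{C}|$ this pins $\mathcal{B}=\{\sigma:\sigma(1)=1,\ \sigma\cap\rho\neq\emptyset\}\cup\{\tau\}$, and un-translating gives the claimed form (the converse — that such a pair is cross-intersecting, not contained in a common $1$-coset, and has $\min=|\mathcal{C}|$ — is a routine check from the counting fact).

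To show each family has only one bad permutation I would mimic the endgame of Theorem~\ref{thm:cameronkuconj}. If $\mathcal{B}$ had two permutations $\tau,\tau'$ not fixing $1$ and agreeing on at most $\log n$ points, every $\sigma\in\mathcal{A}_{1\mapsto 1}$ would agree (necessarily at points $>1$, hence with the $1$-fixes $\tau_1,\tau_1'$) with both; splitting according to whether the two agreements occur at a common point ($\le O(\log n)\cdot(n-2)!$ such $\sigma$) or at separate points ($\le((1-1/e)^2+o(1))(n-1)!$ such $\sigma$, exactly as in Theorem~\ref{thm:cameronkuconj}) gives $|\mathcal{A}_{1\mapsto 1}|\le((1-1/e)^2+o(1))(n-1)!$, and with $|\mathcal{A}\setminus\mathcal{A}_{1\mapsto 1}|=O((n-2)!)$ this yields $|\mathcal{A}|<|\mathcal{C}|$, a contradiction. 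So the bad permutations of $\mathcal{B}$ form a $(\log n)$-intersecting family, of size at most $n!/(\log n)!$ by the trivial $t$-intersecting bound. The main obstacle — and where the cross-intersecting case is genuinely harder than the intersecting one, since one can no longer use internal intersecting-ness of $\mathcal{A}$ or of $\mathcal{B}$ — is to upgrade this to ``exactly one bad permutation''. I expect to do this by playing the two families off each other: two distinct bad permutations of $\mathcal{B}$ force $|\mathcal{A}_{1\mapsto 1}|$ strictly below $(n-1)!-d_{n-1}-d_{n-2}$ (distinct permutations not fixing $1$ give distinct derangement-type sets, so $\{\sigma:\sigma(1)=1,\sigma\cap\tau=\emptyset\}\cup\{\sigma:\sigma(1)=1,\sigma\cap\tau'=\emptyset\}$ is strictly larger than either part), and symmetrically the bad permutations of $\mathcal{A}$ shrink $|\mathcal{B}_{1\mapsto 1}|$; feeding these two reductions against $|\mathcal{A}|,|\mathcal{B}|\ge|\mathcal{C}|$, together with the $(\log n)$-intersecting information and, when the number of bad permutations is large, the ``common point versus separate points'' estimate again, should collapse both families to a single bad permutation. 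Making this last quantitative bookkeeping tight enough to conclude is the crux.
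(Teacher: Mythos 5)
Two points, one fixable and one essential. First, your argument for ruling out \(\mathcal{A} \subseteq \{\sigma : \sigma(1)=1\}\) is invalid: if every permutation of \(\mathcal{A}\) and of \(\mathcal{B}_{1 \mapsto 1}\) fixes \(1\), then any \(\sigma \in \mathcal{A}\) and \(\pi \in \mathcal{B}_{1 \mapsto 1}\) already agree at the point \(1\), so cross-intersection imposes no condition on \(\{2,\ldots,n\}\); after deleting the point \(1\) the two families need not be cross-intersecting in \(S_{n-1}\), and the claimed bound \(|\mathcal{A}||\mathcal{B}_{1\mapsto 1}| \leq ((n-2)!)^{2}\) is false (e.g.\ \(\mathcal{A}=\mathcal{B}=\{\sigma:\sigma(1)=1\}\) is cross-intersecting with product \(((n-1)!)^{2}\)). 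The deletion trick of Lemma \ref{lemma:crosslemma} works only because there the two families send \(i\) to \emph{different} points. The conclusion you want does hold, but for the reason the paper uses: if \(\mathcal{A} \subseteq \{\sigma:\sigma(1)=1\}\), the hypothesis supplies \(\rho \in \mathcal{B}\) with \(\rho(1)\neq 1\), and your own counting fact then gives \(|\mathcal{A}| \leq (n-1)!-d_{n-1}-d_{n-2} = |\mathcal{C}|-1\), a contradiction.

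The essential gap is the step you yourself call the crux: showing each family contains exactly one permutation not fixing \(1\). Your play-off gains only ``strictly larger than either part'', i.e.\ at least one extra excluded permutation; this yields \(|\mathcal{A}_{1\mapsto 1}| \leq |\mathcal{C}|-2\), hence \(\mathcal{A}\) has at least two bad permutations, and symmetrically for \(\mathcal{B}\) --- a stalemate (consistent with both families having exactly two bad permutations), not a contradiction; and since the bad sets may well be \((\log n)\)-intersecting, the ``common point versus separate points'' estimate is not available to restart the argument. What the paper does here is quantitatively stronger: if a family contains \emph{two} permutations \(\rho,\pi\) not fixing \(1\), the \(i\)-fix argument from Theorem \ref{thm:cameronkuconj} (take a point \(p>2\) shifted by \(\pi\) and count the \(d_{n-2}\) permutations fixing \(1\) and \(p\) which disagree with \(\pi_{1,p}\) elsewhere) forces the other family's \(1\mapsto 1\) part below \((n-1)!-d_{n-1}-2d_{n-2}\), a gain of a full \(d_{n-2} = \Omega((n-2)!)\); combined with \(\min(|\mathcal{A}|,|\mathcal{B}|) \geq |\mathcal{C}|\) this makes the other family's bad part of size at least \(d_{n-2}+1\), which beats the trivial bound \(n!/(\log n)!\) for \((\log n)\)-intersecting families, so that bad part contains two permutations agreeing in at most \(\log n\) points, and the common/separate-points count then gives the \(((1-1/e)^{2}+o(1))(n-1)!\) bound contradicting rough stability. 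Without a gain on this \(\Omega((n-2)!)\) scale your bookkeeping does not terminate, so the proposal as written does not prove the theorem; the surrounding skeleton (Theorem \ref{thm:roughstabilitycross}, reduction to \(i=j=1\), the counting fact \((n-1)!-d_{n-1}-d_{n-2}\), and the final pinning-down of the equality case) does match the paper's proof.
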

\begin{proof}
Let \(\mathcal{A},\mathcal{B} \subset S_{n}\) be cross-intersecting, and not both centred, with \[\min(|\mathcal{A}|,|\mathcal{B}|) \geq |\mathcal{C}|.\]
Applying Theorem \ref{thm:roughstabilitycross} with any \(c < 1-1/e\), we see that there exist \(i,j \in [n]\) such that 
\[|\mathcal{A} \setminus \mathcal{A}_{i \mapsto j}|,|\mathcal{B} \setminus \mathcal{B}_{i \mapsto j}|\leq O((n-2)!).\]
By double translation, we may assume that \(i=j=1\), so
\[|\mathcal{A} \setminus \mathcal{A}_{1 \mapsto 1}|,|\mathcal{B} \setminus \mathcal{B}_{1 \mapsto 1}|\leq O((n-2)!).\]
Assume \(\mathcal{A}\) is not contained within the 1-coset \(\{\sigma \in S_{n}:\ \sigma(1)=1\}\); let \(\rho\) be a permutation in \(\mathcal{A}\) not fixing 1. Suppose for a contradiction that \(\mathcal{A}\) contains another permutation \(\pi\) not fixing 1. As in the proof of Theorem \ref{thm:cameronkuconj}, this implies that
\[|\mathcal{B}_{1 \mapsto 1}| \leq (n-1)!-d_{n-1}-2d_{n-2},\]
and so by assumption,
\[|\mathcal{B}\setminus \mathcal{B}_{1 \mapsto 1}| \geq d_{n-2}+1,\]
so \(\mathcal{B} \setminus \mathcal{B}_{1 \mapsto 1}\) cannot be \((\log n)\)-intersecting. As in the proof of Theorem \ref{thm:cameronkuconj}, this implies that
\[|\mathcal{A}_{1 \mapsto 1}| \leq ((1-1/e)^{2}+o(1))(n-1)!,\]
giving
\[|\mathcal{A}| \leq ((1-1/e)^{2}+o(1))(n-1)! < |\mathcal{C}|\]
---a contradiction. Hence,
\[\mathcal{A} = \mathcal{A}_{1 \mapsto 1} \cup \{\rho\}.\]
If \(\mathcal{B}\) were centred, then every permutation in \(\mathcal{B}\) would have to fix 1 and intersect \(\rho\), and we would have \(|\mathcal{B}| = |\mathcal{B}_{1 \mapsto 1}| \leq (n-1)!-d_{n-1}-d_{n-2} < |\mathcal{C}|\), a contradiction. Hence, \(\mathcal{B}\) is also non-centred. Repeating the above argument with \(\mathcal{B}\) in place of \(\mathcal{A}\), we see that \(\mathcal{B}\) contains just one permutation not fixing 1, \(\tau\) say. Hence,
\[\mathcal{B} = \mathcal{B}_{1 \mapsto 1} \cup \{\tau\}.\]
Since \(\min(|\mathcal{A}|,|\mathcal{B}|) \geq |\mathcal{C}|\), we have
\begin{eqnarray*}
\mathcal{A}_{1 \mapsto 1}&=&\{\sigma \in S_{n}: \sigma(1)=1,\ \sigma \textrm{ intersects } \tau\},\\
\mathcal{B}_{1 \mapsto 1}& =& \{\sigma \in S_{n}: \sigma(1)=1,\ \sigma \textrm{ intersects } \rho \},
\end{eqnarray*}
proving the theorem.
\end{proof}

Similarly, we may prove

\begin{theorem}
For \(n\) sufficiently large, if \(\mathcal{A},\mathcal{B} \subset S_{n}\) are cross-intersecting but not both contained within the same 1-coset, then
\[|\mathcal{A}||\mathcal{B}| \leq ((n-1)!-d_{n-1}-d_{n-2})((n-1)!+1),\]
with equality if and only if
\[\mathcal{A}=\{\sigma \in S_{n}: \sigma(i)=j,\ \sigma \textrm{ intersects } \rho\},\quad \mathcal{B} = \{\sigma \in S_{n}: \sigma(i)=j\}\cup\{\rho\}\]
for some \(i,j \in [n]\) and some \(\rho \in S_{n}\) with \(\rho(i) \neq j\).
\end{theorem}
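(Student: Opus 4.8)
The plan is to mirror the proof of Theorem~\ref{thm:stabmincross}, converting its additive estimates into multiplicative ones and allowing for the fact that, in the extremal configuration here, one of the two families is centred. Fix a constant $c$ with $c^{2}<1-1/e$. If $\sqrt{|\mathcal{A}||\mathcal{B}|}<c(n-1)!$ then $|\mathcal{A}||\mathcal{B}|<c^{2}((n-1)!)^{2}$, which for large $n$ is strictly less than $((n-1)!-d_{n-1}-d_{n-2})((n-1)!+1)=(1-1/e+o(1))((n-1)!)^{2}$, so we are done (and with strict inequality, so equality cannot hold). Otherwise $\sqrt{|\mathcal{A}||\mathcal{B}|}\ge c(n-1)!$, and Theorem~\ref{thm:roughstabilitycross} gives $i,j$ with $|\mathcal{A}\setminus\mathcal{A}_{i\mapsto j}|,|\mathcal{B}\setminus\mathcal{B}_{i\mapsto j}|\le O((n-2)!)$; after a double translation take $i=j=1$, so that $|\mathcal{A}|,|\mathcal{B}|\le(1+o(1))(n-1)!$.

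If $\mathcal{A}$ and $\mathcal{B}$ were both contained in $\{\sigma:\sigma(1)=1\}$, they would lie in a common $1$-coset, contrary to hypothesis. If both contained a permutation not fixing $1$, then $\mathcal{A}_{1\mapsto 1}$ and $\mathcal{B}_{1\mapsto 1}$ would each have to meet a fixed non-fixer of the other family and so would each lie in a set of exactly $(n-1)!-d_{n-1}-d_{n-2}$ permutations; hence $|\mathcal{A}|,|\mathcal{B}|\le(1-1/e+o(1))(n-1)!$ and $|\mathcal{A}||\mathcal{B}|\le((1-1/e)^{2}+o(1))((n-1)!)^{2}$, strictly below the target since $(1-1/e)^{2}<1-1/e$. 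So exactly one family, say $\mathcal{B}$, contains a permutation $\rho$ with $\rho(1)\ne 1$, while $\mathcal{A}\subseteq\{\sigma:\sigma(1)=1\}$; a further double translation fixing this $1$-coset lets us assume $\rho=(1\ 2)$.

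Next I claim $\mathcal{B}$ has exactly one permutation not fixing $1$. Suppose it had a second, $\pi$. If the permutations of $\mathcal{B}$ not fixing $1$ are not $(\log n)$-intersecting, two of them agree on at most $\log n$ points, and since every $\sigma\in\mathcal{A}$ fixes $1$ and meets both, the ``two separate agreement points'' estimate from the proof of Theorem~\ref{thm:cameronkuconj} gives $|\mathcal{A}|\le((1-1/e)^{2}+o(1))(n-1)!$, so $|\mathcal{A}||\mathcal{B}|$ is again below the target. If instead they are $(\log n)$-intersecting, the bound $|\mathcal{F}|\le n!/t!$ for $t$-intersecting $\mathcal{F}$ shows there are at most $n!/(\log n)!=o((n-2)!)$ of them, so $|\mathcal{B}|\le(n-1)!+o((n-2)!)$; and the $i$-fix argument of Theorem~\ref{thm:cameronkuconj} applied to $(1\ 2)$ and $\pi$ bars a further $d_{n-2}$ permutations from $\mathcal{A}$ (on top of the $d_{n-1}+d_{n-2}$ barred because $\mathcal{A}$ must meet $(1\ 2)$), giving $|\mathcal{A}|\le(n-1)!-d_{n-1}-2d_{n-2}$, whence $|\mathcal{A}||\mathcal{B}|\le\big((n-1)!-d_{n-1}-2d_{n-2}\big)\big((n-1)!+o((n-2)!)\big)<\big((n-1)!-d_{n-1}-d_{n-2}\big)(n-1)!$, still below the target. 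So $\mathcal{B}=\mathcal{B}_{1\mapsto 1}\cup\{\rho\}$.

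Finally, since $\rho(1)\ne 1$, every $\sigma\in\mathcal{A}$ must agree with $\rho$ somewhere in $\{2,\dots,n\}$, so $\mathcal{A}\subseteq\{\sigma:\sigma(1)=1,\ \sigma \text{ meets } \rho\}$, a set of exactly $(n-1)!-d_{n-1}-d_{n-2}$ permutations, while $|\mathcal{B}|=|\mathcal{B}_{1\mapsto 1}|+1\le(n-1)!+1$; multiplying gives the claimed bound. Equality forces $\mathcal{A}$ to be the whole set $\{\sigma:\sigma(1)=1,\ \sigma \text{ meets } \rho\}$ and $\mathcal{B}_{1\mapsto 1}$ to be the full $1$-coset $\{\sigma:\sigma(1)=1\}$; this pair really is cross-intersecting, since all of $\mathcal{A}$ and all of $\mathcal{B}_{1\mapsto 1}$ fix $1$ while $\rho$ meets all of $\mathcal{A}$, and undoing the double translations yields the stated pair (with the roles of $\mathcal{A}$ and $\mathcal{B}$ possibly interchanged). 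I expect the only real difficulty to be the third paragraph: the gain from the $i$-fix trick is only of order $(n-2)!$, the same order as the error in Theorem~\ref{thm:roughstabilitycross}, so one must split on whether the permutations of $\mathcal{B}$ moving $1$ are $(\log n)$-intersecting and exploit the much sharper count $n!/(\log n)!$ in that case, so that the error terms stay genuinely lower order than the saving.
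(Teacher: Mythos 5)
Your proposal is correct and follows essentially the same route as the paper: the rough stability theorem for cross-intersecting pairs, double translation to $i=j=1$, the observation that both families containing a non-fixer forces the product down to $((1-1/e)^{2}+o(1))((n-1)!)^{2}$, and then the $i$-fix and $(\log n)$-intersecting arguments imported from the proof of the Cameron--Ku theorem to show the non-centred family contains exactly one permutation outside the 1-coset, after which the bound and the equality case fall out. The only difference is organisational: where the paper assumes the product is at least the target, deduces $|\mathcal{B}\setminus\mathcal{B}_{1\mapsto 1}|=\Omega((n-2)!)$ and contradicts the geometric-mean bound, you case-split on whether the permutations of $\mathcal{B}$ moving $1$ are $(\log n)$-intersecting and bound the product directly in each case --- same ingredients, same conclusion.
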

\begin{proof}
Let \(\mathcal{A},\mathcal{B} \subset S_{n}\) be cross-intersecting, and not both centred, with
\[|\mathcal{A}||\mathcal{B}| \geq ((n-1)!-d_{n-1}-d_{n-2})((n-1)!+1).\]
We have
\[\sqrt{|\mathcal{A}||\mathcal{B}|} \geq (\sqrt{1-1/e}-O(1/n))(n-1)!,\]
so applying Theorem \ref{thm:roughstabilitycross} with any \(c < \sqrt{1-1/e}\), we see that there exist \(i,j \in [n]\) such that
\[|\mathcal{A} \setminus \mathcal{A}_{i \mapsto j}|,|\mathcal{B} \setminus \mathcal{B}_{i \mapsto j}|\leq O((n-2)!).\]
By double translation, we may assume that \(i=j=1\), so
\[|\mathcal{A} \setminus \mathcal{A}_{1 \mapsto 1}|,|\mathcal{B} \setminus \mathcal{B}_{1 \mapsto 1}|\leq O((n-2)!).\]
Therefore,
\begin{equation}
\label{eq:prodbd}
\sqrt{|\mathcal{A}_{1 \mapsto 1}||\mathcal{B}_{1 \mapsto 1}|} \geq (\sqrt{1-1/e}-O(1/n))(n-1)!.
\end{equation}
If \(\mathcal{B}\) contains some permutation \(\rho\) not fixing 1, then
\[\mathcal{A}_{1 \mapsto 1} \subset \{\sigma \in S_{n}: \sigma(1)=1, \sigma \textrm{ intersects }\rho\},\]
and therefore
\[|\mathcal{A}_{1 \mapsto 1}| \leq (n-1)!-d_{n-1}-d_{n-2} = (1-1/e+o(1))(n-1)!.\]
Similarly, if \(\mathcal{A}\) contains a permutation not fixing 1, then
\[|\mathcal{B}_{1 \mapsto 1}| \leq (1-1/e+o(1))(n-1)!.\]
By (\ref{eq:prodbd}), both statements cannot hold (provided \(n\) is large), so we may assume that every permutation in \(\mathcal{A}\) fixes 1, and that \(\mathcal{B}\) contains some permutation \(\rho\) not fixing 1. Hence,
\[\mathcal{A} \subset \{\sigma \in S_{n}: \sigma(1)=1, \sigma \textrm{ intersects }\rho\},\]
and
\begin{equation}
\label{eq:Asmall}
|\mathcal{A}| \leq (n-1)!-d_{n-1}-d_{n-2} = (1-1/e+o(1))(n-1)!.
\end{equation}
So by assumption,
\begin{equation}
\label{eq:Blarge}
|\mathcal{B}| \geq (n-1)!+1.
\end{equation}
Suppose for a contradiction that \(\mathcal{B}\) contains another permutation \(\pi \neq \rho\) such that \(\pi(1) \neq 1\). Then, by the same argument as in the proof of Theorem \ref{thm:cameronkuconj}, we would have
\[|\mathcal{A}|=|\mathcal{A}_{1 \mapsto 1}| \leq (n-1)!-d_{n-1}-2d_{n-2},\]
so by assumption,
\[|\mathcal{B}| \geq \frac{((n-1)!-d_{n-1}-d_{n-2})((n-1)!+1)}{(n-1)!-d_{n-1}-2d_{n-2}} = (n-1)!+\Omega((n-2)!).\]
This implies that \(|\mathcal{B}\setminus \mathcal{B}_{1 \mapsto 1}| = \Omega((n-2)!)\), so \(\mathcal{B} \setminus \mathcal{B}_{1 \mapsto 1}\) cannot be \((\log n)\)-intersecting. Hence, by the same argument as in the proof of Theorem \ref{thm:cameronkuconj},
\[|\mathcal{A}_{1 \mapsto 1}| \leq ((1-1/e)^{2}+o(1))(n-1)!.\]
Therefore,
\[\sqrt{|\mathcal{A}_{1 \mapsto 1}||\mathcal{B}_{1 \mapsto 1}|} \leq (1-1/e+o(1))(n-1)!\]
--- contradicting (\ref{eq:prodbd}). Hence, \(\rho\) is the only permutation in \(\mathcal{B}\) not fixing 1, i.e.
\[\mathcal{B} = \mathcal{B}_{1 \mapsto 1} \cup \{\rho\}.\]
So we must have equality in (\ref{eq:Blarge}), i.e.
\[\mathcal{B}_{1 \mapsto 1} = \{\sigma \in S_{n}:\ \sigma(1)=1\}.\]
But then we must also have equality in (\ref{eq:Asmall}), i.e.
\[\mathcal{A} = \{\sigma \in S_{n}:\ \sigma(1)=1,\ \sigma \textrm{ intersects }\rho\},\]
proving the theorem.
\end{proof}
\section{Conclusion and open problems}
Due to our use of the martingale inequality in Theorem \ref{thm:maurey}, our proof of the Cameron-Ku conjecture requires \(n > 10^4\), so it is obviously impracticable to check the remaining cases using a computer. It would be interesting to find a proof that works for all \(n \geq 6\); we do not rule out the possibility of a purely combinatorial proof, although we have been unable to find one.

We now turn to the question of \(k\)-intersecting families of permutations. In \cite{jointpaper}, it is proved that for \(n\) sufficiently large depending on \(k\), if \(\mathcal{A} \subset S_n\) is \(k\)-intersecting, then \(|\mathcal{A}| \leq (n-k)!\), with equality only if \(\mathcal{A}\) is a `\(k\)-coset', meaning a family of the form
\[\{\sigma \in S_n:\ \sigma(i_1)=j_1,\sigma(i_2)=j_2,\ldots,\sigma(i_k)=j_k\},\]
for some distinct \(i_1,\ldots,i_k \in [n]\) and distinct \(j_1,\ldots,j_k \in [n]\). One of the most natural open problems in the area is to obtain an analogue of the Ahlswede-Khachatrian theorem (see \cite{ahslwedekhachatrian}) for \(k\)-intersecting families in \(S_{n}\), i.e. to determine the maximum-sized \(k\)-intersecting families in \(S_{n}\) for \textit{every} value of \(n\) and \(k\). We make the following conjecture:
\begin{conjecture}
A maximum-sized \(k\)-intersecting family in \(S_{n}\) must be a double translate of one of the families
\[\mathcal{F}_{i} = \{\sigma \in S_{n}:\ \sigma \textrm{ has at least } k+i \textrm{ fixed points in } [k+2i]\}\ (0 \leq i \leq (n-k)/2).\]
\end{conjecture}

This would imply that the maximum size is \((n-k)!\) for \(n > 2k\). We believe that new techniques will be required to prove the above conjecture.

In \cite{jcta}, the author proves an analogue of the Cameron-Ku conjecture for \(k\)-intersecting families of permutations:
\begin{theorem}
\label{thm:hiltonmilnertype}
For \(n\) sufficiently large depending on \(k\), if \(\mathcal{A} \subset S_{n}\) is a \(k\)-intersecting family which is not contained within a \(k\)-coset, then \(\mathcal{A}\) is no larger than the family
\begin{eqnarray*}
\mathcal{D} & = & \{\sigma \in S_{n}:\ \sigma(i)=i\ \forall i \leq k,\ \sigma(j)=j\ \textrm{for some}\ j > k+1\}\\
&& \cup \{(1\ k+1),(2\ k+1),\ldots,(k \ k+1)\},
\end{eqnarray*}
which has size \((1-1/e+o(1))(n-k)!\). Moreover, if \(\mathcal{A}\) has the same size as \(\mathcal{D}\), then it must be a double translate of \(\mathcal{D}\).
\end{theorem}
The methods used are similar to those in this paper, but the representation-theoretic arguments are substantially more involved. It would also be interesting to obtain an analogue of the complete non-trivial \(k\)-intersection theorem of Ahlswede and Khachatrian in \cite{completenontrivial}. We make the following conjecture:
\begin{conjecture}
For any \(n\) and \(k\), if \(\mathcal{A} \subset S_{n}\) is a \(k\)-intersecting family which is not contained within a \(k\)-coset, and has the maximum size subject to these conditions, then it must be a double translate of the family \(\mathcal{D}\) in Theorem \ref{thm:hiltonmilnertype}, or of one of the \(\mathcal{F}_{i}\)'s.
\end{conjecture}
We now turn to the question of improving Theorem \ref{thm:roughstability}. We conjecture that the hypothesis \(|\mathcal{A}| \geq \Omega((n-1)!)\) is unnecessary; in fact, we make the following:
\begin{conjecture}
If \(\mathcal{A} \subset S_n\) is intersecting, then it requires the removal of at most
\[(n-2)!-(n-3)!\]
permutations to make it centred. If \(n \geq 6\), then equality holds only if \(\mathcal{A}\) is a double translate of
\[\{\sigma \in S_n:\ \sigma \textrm{ has at least 2 fixed points in }\{1,2,3\}\}.\]
\end{conjecture}
We make the analogous conjecture for \(k\)-intersecting families:
\begin{conjecture}
For \(n\) sufficiently large depending on \(k\), if \(\mathcal{A} \subset S_{n}\) is \(k\)-intersecting, then there exists a \(k\)-coset containing all but at most
\[k((n-k-1)!-(n-k-2)!)\]
of the permutations in \(\mathcal{A}\). This is sharp only when \(\mathcal{A}\) is a double translate of \(\mathcal{F}_{1}\).
\end{conjecture}
\subsubsection*{Acknowledgements}
The author is indebted to Ehud Friedgut for many helpful discussions.

David Ellis\\
Department of Pure Mathematics and Mathematical Statistics,\\
Centre for Mathematical Sciences,\\
University of Cambridge,\\
Wilberforce Road,\\
Cambridge,\\
CB3 0WB\\
UK\\
\\
{\em E-mail:} \texttt{dce27@cam.ac.uk}
\end{document}